\newif\ifPDF
\newtheorem{theorem}{Theorem}[section]
\newtheorem{lemma}[theorem]{Lemma}
\newtheorem{remark}[theorem]{Remark} 
\newtheorem{corollary}[theorem]{Corollary}
\newcommand{\dint}{\displaystyle\int}
\newcommand{\eps}{\varepsilon}
 \newcommand{\bbE}{\mathbb E}
\newcommand{\bbR}{\mathbb R} \newcommand{\bbS}{\mathbb S}
\newcommand{\be}{\mathbf e}
\newcommand{\bk}{\mathbf k} 
 \newcommand{\bn}{\mathbf n}
 \newcommand{\bp}{\mathbf p}
\newcommand{\bq}{\mathbf q} 
 \newcommand{\bv}{\mathbf v} 
 \newcommand{\bx}{\mathbf x} 
\newcommand{\by}{\mathbf y} \newcommand{\bz}{\mathbf z}
 \newcommand{\cD}{\mathcal D}
\newcommand{\cI}{\mathcal I} 
\newcommand{\cK}{\mathcal K} \newcommand{\cL}{\mathcal L}
\newcommand{\cM}{\mathcal M} 
\newcommand{\cO}{\mathcal O}
\newcommand{\cW}{\mathcal W}
\newcommand{\aver}[1]{\langle {#1} \rangle}
\newcommand{\wt}{\widetilde}
\newenvironment{keywords}
{\noindent{\bf Key words.}\small}{\par\vspace{1ex}}
\newcommand{\DELETE}[1]{}
 \newcommand{\YZ}[1]{#1}
\title{Transport models for wave propagation in scattering media with nonlinear absorption}
\author{
Joseph Kraisler\thanks{Department of Applied Physics and Applied Mathematics, Columbia University, New York, NY, 10027; \href{mailto:jek2199@columbia.edu}{jek2199@columbia.edu}}
\and
Wei Li\thanks{Department of Mathematical Sciences, Depaul University, Chicago, IL 60604; \href{mailto:wei.li@depaul.edu}{wei.li@depaul.edu}}
\and
Kui Ren\thanks{Department of Applied Physics and Applied Mathematics, Columbia University, New York, NY 10027;\href{mailto:kr2002@columbia.edu}{kr2002@columbia.edu}}
\and
John C. Schotland\thanks{Department of Mathematics and Department of Physics, Yale University, New Haven, CT 06511; \href{mailto:john.schotland@yale.edu}{john.schotland@yale.edu}}
\and
Yimin Zhong\thanks{Department of Mathematics and Statistics, Auburn University, Auburn, AL 36830; \href{mailto:yimin.zhong@auburn.edu}{yimin.zhong@auburn.edu}}
}
\date{}
\begin{document}

\maketitle

\begin{abstract}
This work considers the propagation of high-frequency waves in highly-scattering media where physical absorption of a nonlinear nature occurs. Using the classical tools of the Wigner transform and multiscale analysis, we derive semilinear radiative transport models for the phase-space intensity  and the diffusive limits of such transport models. As an application, we consider an inverse problem for the semilinear transport equation, where we reconstruct the absorption coefficients of the equation from a functional of its solution. We obtain a uniqueness result on the inverse problem.
\end{abstract}
\begin{keywords}
wave propagation, nonlinear media, nonlinear absorption, semilinear radiative transport equation, semilinear diffusion equation, inverse problem
\end{keywords}

\section{Introduction}


The derivation of kinetic models for wave propagation in highly-scattering media is a classical subject~\cite{Ishimaru-Book97,CaSc-Book21} that has received significant attention in the past two decades due to its importance in many emerging applications~\cite{BaSaCoClGu-WM14,BoGaSo-MMS19,GaSo-SIAM08,Margerin-SE05,MiTrLa-Book06,KrSc-JMP22}. A significant amount of progress has been made on both the mathematical justification of the derivation (such as those based on multiscale analysis of the Wigner transform)~\cite{BaPaRy-Nonlinearity02,Erdos-Yau2,Fannjiang-CRP07,GuWa-JMP99,LuSp-ARMA07,RyPaKe-WM96} and computational validation of the derived kinetic models~\cite{BaPi-WM06,BaRe-SIAM08,JiXi-JOSA10,RoMaVuThGr-JOSA01,VoScKi-OL09}; see ~\cite{AkSa-MMS21,AmBoGaJiSe-JMP13,Bal-WM05,BaKoRy-KRM10,BoGa-PRE16,FaRy-SIAM01,GaSo-SIAM08,Gomez-JMPA12} and references therein for additional investigations in this field. The obtained models for imaging in complex media have also been utilized in many different settings~\cite{BaPi-SIAM07,BaRe-SIAM08,BoGaSo-MMS19,ChLi-RMS21}. 

In this work, we are interested in developing kinetic models when nonlinear absorption occurs during wave propagation~\cite{BaReZh-JBO18,Mahr-QE12,RuPe-AOP10,YiLiAl-AO99}. We are mainly motivated by applications where reconstructing the absorption of the underlying medium from internal or boundary observations is of practical interest. Such applications include, for instance, the case of reconstructing the two-photon absorption coefficient of biological tissues with optical or photoacoustic measurements~\cite{DeStWe-Science90,LaReZh-SIAM22,MoViDi-APL08,ReZh-SIAM18,ReZh-SIAM21,StZh-SIAM22,XuWaXuZhYi-OC21,YaNaTa-OE11,YeKo-OE10}.

Our derivation will be carried out in the frequency domain, where wave propagation is described by the standard Helmholtz equation. The nonlinear absorption mechanism we are interested \YZ{in} is modeled as a zeroth-order perturbation to the second-order Helmholtz differential operator. This is the essential factor that makes it possible for us to perform the standard calculations in this field for our nonlinear problem. Even though this is a mathematically less attractive nonlinearity to study, the derivation does provide us a formal justification of the semilinear radiative transport model, see, for instance~\eqref{EQ: TRANS}, used in many applications. We refer interested readers to~\cite{FaJiPa-SIAM03} for the derivation of transport models for a different type of nonlinearity that makes use of a mean-field approximation. \YZ{Furthermore, we emphasize that the derived nonlinear transport model has a variety of important applications. In fluorescence imaging, it can be used to localize and characterize fluorescent molecules by determining their two-photon or multi-photon absorption properties. }

The remainder of this paper is organized as follows. In Section~\ref{SEC:ERT}, we derive the radiative transport model for media with quadratic and higher-order absorption. We then discuss the diffusive limit of the derived transport models in Section~\ref{SEC:Diff}. As an application of the derived model, we study in Section~\ref{SEC:IP} an inverse medium problem for our semilinear radiative transport equation. Concluding remarks are offered in Section~\ref{SEC:Concl}.

\section{Derivation of the transport equation}
\label{SEC:ERT}

For simplicity, we first consider the case of quadratic nonlinear absorption. This could serve as a model of light propagation in media with two-photon absorption~\cite{StZh-SIAM22,XuWaXuZhYi-OC21}. We will then generalize the result to the case of a general polynomial nonlinearity. Let the wave field $p(z, \bx)$ be the solution to the scalar wave equation in the time-harmonic form, that is,
\begin{equation}\label{EQ: WAVE}
\Delta_{\bx} p + \frac{\partial^2 p}{\partial z^2} + k^2n^2 p = 0,
\end{equation}
where $\Delta_{\bx}$ is the transverse Laplacian in $\bx\in\bbR^{d}$ ($d\ge 1$), $k$ is the wave number, and $n = n(z, \bx, p)$ is the refractive index. We assume that the refractive index takes the form
\begin{equation}
    n^2 = 1 - 2\sigma V(\frac{z}{\ell_z}, \frac{\bx}{\ell_{\bx}}) + i k^{-1}\mu \widetilde K(z, \bx) |p|^2\,,
\end{equation}
where $V$ is a real bounded stationary random field with zero mean, with $\ell_{\bx}$ and $\ell_z$ are the transverse and longitudinal correlation lengths of the random field, respectively. The deterministic function $\widetilde K$ is non-negative and measures the strength of the second-order absorption.  The parameters $\sigma$ and $\mu$ are the scaling factors quantifying the amplitudes of the fluctuation and the second-order absorption, respectively.
Assuming that the field $p$ possesses a beam-like structure propagating in $z$ direction, we may write $p(z, \bx) = e^{ik z} \psi(z, \bx)$ with complex amplitude $\psi(z, \bx)$ satisfing the following equation
\begin{equation}\label{EQ: BEAM}
    \frac{\partial^2 \psi}{\partial z^2} + 2 ik \frac{\partial \psi}{\partial z} + \Delta_{\bx}\psi + k^2 (n^2 - 1) \psi = 0\,.
\end{equation}
Let $L_{\bx}$ and $L_z$ be the characteristic lengths of propagation in $\bx$ and $z$ directions, respectively. We rescale the variables $\bx\mapsto L_{\bx} \bx$ and $z\mapsto L_z z$ and $\bx$, $z$ are now $\cO(1)$.  Then with the newly-defined variables, we may write $(n^2 - 1)$ as 
\begin{equation}
    n^2 - 1 = -2\sigma V(\frac{L_z z}{\ell_z}, \frac{L_{\bx}\bx}{\ell_{\bx}}) + i k^{-1}\mu \widetilde K(L_z \bz, L_{\bx}\bx) |\YZ{p}|^2\,.
\end{equation}
The equation~\eqref{EQ: BEAM} now becomes
\begin{equation}
        \frac{1}{L_z^2}\frac{\partial^2 \psi}{\partial z^2} + 2 \frac{ik}{L_z} \frac{\partial \psi}{\partial z} + \frac{1}{L_{\bx}^2}\Delta_{\bx}\psi - 2k^2\sigma V( \frac{L_z z}{\ell_z}, \frac{L_{\bx}\bx}{\ell_{\bx}}) \psi + i k \mu \widetilde K( L_z z, L_{\bx}\bx)|\psi|^2 \psi = 0\,.
\end{equation}
With the small aperture assumption that $L_{\bx}\ll L_z$, we can formally approximate the above equation by the paraxial wave equation 
\begin{equation}
i\frac{\partial \psi}{\partial z} + \frac{L_z}{2kL_{\bx}^2}\Delta_{\bx}\psi - k L_z \sigma V( \frac{L_z z}{\ell_z}, \frac{L_{\bx}\bx}{\ell_{\bx}}) \psi + i\frac{L_z}{2} \mu \widetilde K( L_z z, L_{\bx}\bx)|\psi|^2 \psi = 0\,.
\end{equation}
Our derivation works in the regime where the longitudinal propagation distance $L_z$ is much larger than the correlation length $\ell_z$ and the correlation length is much larger than the wavelength, that is $L_z \gg \ell_z$ and $\ell_z \gg \lambda:=\frac{2\pi}{k}$. We, therefore, introduce the small parameter $\eps$, and assume the scaling relations in the weak-coupling regime:
\begin{equation}\label{eq:epsmult}
\frac{\ell_{\bx}}{L_{\bx}} = \frac{\ell_z}{L_z} = \eps \ll 1,\quad k \ell_{\bx}^2 = \ell_z ,\quad \sigma = \frac{1}{k \ell_z}\sqrt{\eps}, \quad \mu \propto \frac{1}{L_z}.
\end{equation}
Let us denote the rescaled wave field by $\psi_{\eps}( z, \bx)$ and take $K(z, \bx) = L_z\mu\widetilde{K}( L_z z, L_{\bx}\bx)$. Then the paraxial wave equation turns into
\begin{equation}
    i\frac{\partial\psi_{\eps}}{\partial z} + \frac{\eps}{2}\Delta_{\bx}\psi_{\eps} - \frac{1}{\sqrt{\eps}} V( \frac{z}{\eps}, \frac{\bx}{\eps}) \psi_{\eps} + \frac{i}{2}K( z, \bx)|\psi_{\eps}|^2\psi_{\eps} = 0.
\end{equation}
We then take the Wigner transform of $\psi_{\eps}$:
\begin{equation}
    W_{\eps}(z, \bx, \bk) = \int_{\bbR^d} e^{i\bk\cdot \by}\psi_{\eps}(\bx - \frac{\eps\by}{2}, z) \overline{\psi_{\eps}(\bx+\frac{\eps\by}{2}, z)} \frac{d\by}{(2\pi)^d},
\end{equation}
It is then standard to check $W_{\eps}(z, \bx, \bk)$ satisfies the Liouville equation:
\begin{equation}\label{EQ: LIOU}
    \frac{\partial W_{\eps}}{\partial z} + \bk \cdot \nabla_{\bx} W_{\eps} + \cL_{V,\eps} W_{\eps} + \cL_{K, \eps} W_{\eps} = 0,
\end{equation}
where
\begin{equation}
\begin{aligned}
    \cL_{V, \eps} W_{\eps} &= \frac{i}{\sqrt{\eps}}  \int_{\bbR^d} e^{-i\bp\cdot \bx / \eps } \left(W_{\eps}(z, \bx, \bk+\frac{\bp}{2}) - W_{\eps}(z, \bx, \bk - \frac{\bp}{2})\right)  \YZ{\widehat{V}( \frac{z}{\eps}, \bp )d\bp}   , \\
    \cL_{K, \eps} W_{\eps} &= \frac{1}{2}\int_{\bbR^d} e^{-i\bp\cdot x} \left(W_{\eps}(z, \bx, \bk+\frac{\eps \bp}{2}) + W_{\eps}( z, 
\bx, \bk - \frac{\eps \bp}{2})\right)  \YZ{\widehat{S}_{\eps}(z, \bp)d\bp}\,.
\end{aligned}
\end{equation}
Here $S_{\eps}(z, \bx)$ is defined as
\begin{equation}\label{EQ: S}
\begin{aligned}
    S_{\eps}(z, \bx) := K(z, \bx) |\psi_{\eps}(z, \bx)|^2 = K(z, \bx) \int_{\bbR^d} W_{\eps}(  z, \bx, \bk) d\bk\,,
\end{aligned}
\end{equation}
while $\widehat{V}$ and $\widehat{S}_{\eps}$ denote the Fourier transform ($\bx\to \bp$) of $V$ and $S_{\eps}$ respectively. We use the standard Fourier transform definition
\begin{equation*}
	\widehat{f}(\bp)=\int_{\bbR^d} e^{i\bp\cdot \bx}f(\bx) \frac{d\bx}{(2\pi)^d}\,.
\end{equation*}

\subsection{Multiscale expansion}

In order to find the asymptotic limit as $\eps\to 0$, we introduce $\by =\bx/\eps $ as the fast variable and denote $W_{\eps}(z, \bx, \bk) = W_{\eps}(z, \bx, \by, \bk)$. Formally we write $W_{\eps}$ as an asymptotic expansion in $\eps$:
\begin{equation*}
    W_{\eps}(z, \bx, \by, \bk) = W_0(z, \bx, \by, \bk) + \sqrt{\eps} W_1(z, \bx, \by, \bk) + \eps W_2 (z, \bx, \by, \bk) + \cdots\,.
\end{equation*}
\YZ{For the linear model, the main theoretical difficulty in the rigorous derivation of the transport equation is to estimate the remainder in the above ansatz~\cite{Erdos-Yau2}. For the two-photon absorption nonlinear model, an additional difficulty is brought about by the nonlinear term $\cK_{K,\eps} W_{\eps}$, which arises at the $\cO(1)$ scale. Additional regularity of $W_0$ is required to obtain a suitable estimate.} Using~\eqref{EQ: S}, we may also expand $S_{\eps}(z, \bx) = S_{\eps}(z, \bx, \by)$ accordingly as
\begin{equation*}
    S_{\eps}(z, \bx, \by) = S_0(z, \bx, \by) + \sqrt{\eps} S_1(z, \bx, \by) + \eps S_2(z, \bx, \by) + \cdots\,.
\end{equation*}
where 
\begin{equation*}
   S_0(z, \bx, \by) = K(z, \bx) \int_{\bbR^d} W_0(z, \bx, \by, \bk)d\bk\,.
\end{equation*}
We can now plug in the transform $\nabla_{\bx} \to \nabla_{\bx}+\frac{1}{\eps}\nabla_{\by}$ into~\eqref{EQ: LIOU} to conclude that the leading order equation at $\cO(\eps^{-1})$ implies $\bk \cdot \nabla_{\by} W_0 = 0 $. This is equivalent to
\[
	W_0 (z, \bx, \by, \bk) = W_0(z, \bx, \bk), \ \ \ \mbox{and}\ \ \ S_0(z, \bx,\by) = S_0(z, \bx)\,.
\]
For the order of $\cO(\eps^{-1/2})$, we have
\begin{equation}
    \bk\cdot \nabla_{\by} W_1 + \alpha W_1 + i \int_{\bbR^d} e^{-i\bp\cdot \by} (W_0(z, \bx, \bk+\frac{\bp}{2}) - W_0(z, \bx, \bk-\frac{\bp}{2})) \widehat{V}( \frac{z}{\eps}, \bp) \YZ{d\bp} = 0\,,
\end{equation}
where $\alpha\to 0^{+}$. This gives that the Fourier transform of $W_1$, $\widehat{W}_1$,  is
\begin{equation}
    \widehat{W}_1(z, \bx, \bp, \bk) = \frac{ (W_0(z, \bx, \bk+\frac{\bp}{2}) - W_0(z, \bx, \bk-\frac{\bp}{2})) \widehat{V}(\frac{z}{\eps}, \bp) }{\bp \cdot \bk + i\alpha}\,.
\end{equation}
Finally, we derive the equation for $\cO(1)$ terms. \YZ{In order to handle the nonlinearity, we impose some regularity assumptions.} Let $s > d+2$ and assume there exist positive constants $C_1$, $C_2$ and $C_3$ such that
\begin{equation*}
\begin{aligned}
    \|W_0(z, \bx, \cdot) \|_{C^1(\bbR^d)} + \|W_0(z, \bx, \cdot) \|_{L^1(\bbR^d)} < C_1,\quad &\forall (z, \bx)\in \bbR^{d+1}, \\
   \left\|\int_{\bbR^d} W_0(z, \cdot, \bk) d\bk  \right\|_{H^s(\bbR^{d})} < C_2,\quad &\forall z \in\bbR, \\
   \|K(z, \cdot)\|_{H^s(\bbR^d)} < C_3 ,\quad& \forall z\in\bbR\,.
\end{aligned}
\end{equation*}
 Then we have that
\begin{equation}
    W_{0}(z, \bx, \bk - \frac{\eps}{2}\bp
     ) + W_{0}(z, \bx, \bk + \frac{\eps}{2}\bp
     ) =  2W_{0}(z, \bx, \bk) + R(z, \bx, \bk)\,,
\end{equation}
where \YZ{$|R(z,\bx, \bk)|\le C_1 \eps|\bp|$}   . Moreover, we have that
\begin{equation}
\begin{aligned}
     \left|\int_{\bbR^d}   \frac{
   |\bp| |\widehat{S}_0(z, \bp)|d\bp}{
  (2\pi)^d} \right|^2 &\le \frac{1}{(2\pi)^{2d}} \int_{\bbR^d} \frac{|\bp|^2}{(1 + |\bp|^2)^{s/2}} d\bp \int_{\bbR^d} (1 + |\bp|^2)^{\frac{s}{2}} |\widehat{S}_0(z, \bp)|^2 d\bp \\
  &=  \frac{1}{(2\pi)^{2d}}\left(   \int_{\bbR^d} \frac{|\bp|^2}{(1 + |\bp|^2)^{s/2}} d\bp \right)\|S_0(z, \cdot)\|_{H^s(\bbR^d)}^2 \\
  &\le  C_s \|K(z, \cdot)\|_{H^s(\bbR^d)}^2 \left\|\int_{\bbR^d} W_0(z, \cdot, \bk) d\bk\right\|_{H^s(\bbR^d)}^2
\end{aligned}
\end{equation}
is also uniformly bounded, where $C_s > 0$ is a constant depending only on $s$. The last inequality holds since $s>d+2$ implies $s>d/2$. Therefore the $\cO(1)$ term is
\begin{equation}
\begin{aligned}
    &\frac{\partial W_0}{\partial z} + \bk\cdot \nabla_{\bx} W_0 + \bk \cdot \nabla_{\by} W_2  \\
    &+ i \int_{\bbR^d} e^{-i\bp\cdot \by} (W_1 (z, \bx, \by, \bk+\frac{\bp}{2}) - W_1 (z, \bx, \by, \bk-\frac{\bp}{2}) ) \widehat{V}( \frac{z}{\eps}, \bp) \YZ{d\bp}\\
    & + W_0(z, \bx, \bk) S_0(z, \bx)= 0\,.
\end{aligned}
\end{equation}
In order to close the equation, we still need to add the orthogonal relation between $W_0$ and $W_2$, that is, $\bbE[\bk\cdot \nabla_{\by} W_2] = 0$. Hence, we have
\begin{equation}\label{EQ: EXP}
\begin{aligned}
    &\frac{\partial W_0}{\partial z} + \bk\cdot \nabla_{\bx} W_0 \\
    &+\bbE\left [ i \int_{\bbR^d} e^{-i\bp\cdot \by} (W_1 (z, \bx, \by, \bk+\frac{\bp}{2}) - W_1 (z, \bx, \by, \bk-\frac{\bp}{2}) ) \widehat{V}( \frac{z}{\eps}, \bp) \YZ{d\bp}\right] \\
    &+ W_0(z, \bx, \bk) S_0(z, \bx) = 0\,.
\end{aligned}
\end{equation}
Let $R$ be the correlation function of $V$, and assume that the power spectrum satisfies
\begin{equation}
    \bbE[\widehat{V}(z, \bp)\widehat{V}(z, \bq)] = \YZ{ \widehat{R}(\bp)\delta(\bp+\bq) } \,.
\end{equation}
Then \YZ{as $\eps\to 0^{+}$}, the expectation term in~\eqref{EQ: EXP} converges weakly to
\begin{equation*}
\begin{aligned}
    &\bbE \left[i\int_{\bbR^d} e^{ -i\bp\cdot \bx/\eps} (W_1(z,  \bx, \frac{\bx}{\eps},  \bk+\frac{\bp}{2} ) - W_1(z, \bx, \frac{\bx}{\eps}, \bk - \frac{\bp}{2})) \widehat{V}(\frac{z}{\eps}, \bp) \YZ{d\bp}\right] \\
   & \to 4\pi \int_{\bbR^d} \widehat{R}(\bp-\bk) [W_0(z,\bx,\bk) - W_0(z,\bx,\bp)] \delta(|\bk|^2 - |\bp|^2) d\bp \,.
\end{aligned}
\end{equation*}
Therefore the final radiative transport equation of $W_0$ is 
\begin{equation}\label{EQ: TRANS}
\begin{aligned}
    &\frac{\partial W_0}{\partial z} + \bk\cdot \nabla_{\bx} W_0 + 4\pi\int_{\bbR^d} \widehat{R}(\bp-\bk) [W_0(z, \bx, \bk) - W_0(z, \bx,  \bp)] \delta(|\bk|^2 - |\bp|^2)  d\bp \\
    & + \left( K(z, \bx)\int_{\bbR^d} W_0(z, \bx,  \bk') d\bk'\right) W_0(z, \bx, \bk) = 0\,.
 \end{aligned}
\end{equation}
\YZ{Physically, the last term on the left side of ~\eqref{EQ: TRANS} accounts for quadratic absorption, which indicates that the absorption coefficient linearly depends on the angular average of $W_0(\bx, z, \bk)$. Although $W_0(z,\bx, \bk)$ is not guaranteed to be a non-negative quantity, the angular average $\int_{\bbR^d}W_{0}(z,\bx, \bk)d\bk = \lim_{\eps\to 0}|\psi_{\eps}|^2$ is always non-negative.} 

\subsection{Extension to higher order absorption}

We now extend the previous result to the case of general polynomial absorption. This could be a physical model for the propagation of light in media with multi-photon absorption~\cite{Boyd-Book20}.  The absorption term in the refractive index is modeled by 
\begin{equation}
  n^2 = 1 - 2\sigma V( \frac{z}{\ell_z}, \frac{\bx}{\ell_{\bx}}) +  i k^{-1}\mu \sum_{l=0}^{\YZ{L}} \widetilde{K}_l(z, \bx) |\YZ{p}|^{2l},
\end{equation}
where $\widetilde{K}_l$ stands for the absorption strength of $(l+1)$-th order. Following the same derivation of the paraxial wave equation, we have a new Liouville equation in the form of ~\eqref{EQ: LIOU} where the only modification is the term
\begin{equation}
    S_{\eps}(\bx, z) = \sum_{l = 0}^{L} K_l(z, \bx) |\psi_{\eps}(z, \bx)|^{2l} ,\quad K_l(z, \bx) := \mu L_z\widetilde K_l( L_z z, L_{\bx}\bx)\,.
\end{equation}
Assuming that $K_l$ and $W_0$ are sufficiently regular, we can follow the same procedure and obtain the radiative transport equation for $W_0$ in the setting of the polynomial absorption: 
\begin{equation}\label{EQ: M TRANS}
\begin{aligned}
    &\frac{\partial W_0}{\partial z} + \bk\cdot \nabla_{\bx} W_0 + 4\pi \int_{\bbR^d} \widehat{R}(\bp-\bk) [W_0(z, \bx, \bk) - W_0(z, \bx,  \bp)] \delta(|\bk|^2 - |\bp|^2)  d\bp \\
    & + \left(\sum_{l=0}^L K_l(z, \bx)\left[\int_{\bbR^d} W_0(z, \bx,  \bk') d\bk'\right]^l\right) W_0(z, \bx, \bk) = 0\,.
 \end{aligned}
\end{equation}

\subsection{The nonlinear RTE in the standard form}

For a monochromatic solution $W_0(z, \bx,\bk)$ which is supported on $\|\bk\|=1$, the above radiative transport equation becomes 
\begin{equation}\label{EQ: RTE TRANS}
\begin{aligned}
    &\frac{\partial W_0}{\partial z} + \bk\cdot \nabla_{\bx} W_0 + 4\pi \int_{\bbS^{d-1}} \widehat{R}(\bp-\bk) [W_0(z, \bx, \bk) - W_0(z, \bx,  \bp)] d\bp \\
    & + \left(\sum_{l=0}^L K_l(z, \bx)\left[\int_{\bbS^{d-1}} W_0(z, \bx,  \bk') d\bk'\right]^l\right) W_0(z, \bx, \bk) = 0\,,
 \end{aligned}
\end{equation}
where $\bbS^{d-1}$ is the unit sphere in $\bbR^d$. It can further be put in the standard form
\begin{equation}\label{EQ: RTE 0}
\begin{aligned}
    \frac{\partial W_0}{\partial z} + \bk\cdot \nabla_{\bx} W_0 + \Sigma_a(\aver{W_0}) W_0+ \Sigma_s (W_0 - \cK W_0 ) &= 0.
\end{aligned}
\end{equation}
Here the total energy of the field at $\bx$ is denoted by
$$\aver{W_0}:= \int_{\bbS^{d-1}} W_0(z, \bx, \bk)d\bk$$
and
\begin{equation}\label{EQ:Poly Absorption}
	\Sigma_a(\aver{W_0}) =\sum_{l=0}^{L} \Sigma_{a, l} \aver{W_0}^l
\end{equation}
is the effective absorption coefficient with $\Sigma_{a, l}(z, \bx) = K_{l}(z, \bx)$ is the absorption coefficient of $(l+1)$-th order. The scattering coefficient is $\Sigma_s(\bk) := 4\pi \dint_{\bbS^{d-1}} \widehat{R}(\bp - \bk) d\bp$ and the scattering operator $\cK$ is defined as
\begin{equation}
    \cK u(z, \bx, \bk) := \int_{\bbS^{d-1}} p(\bk, \bk') u(z, \bx, \bk') d\bk'
\end{equation}
where the scattering phase function $$ p(\bk, \bk') = \frac{4\pi \widehat{R}(\bk - \bk')}{\Sigma_s(\bk)}.$$ For the problem to be physical, the absorption strengths $K_l$ ($0\le l\le L$) should be all non-negative. 

\section{The diffusion limit}
\label{SEC:Diff}

We now study the diffusion limit of the transport equation for $W_0(z, \bx, \bk)$ with general polynomial nonlinear absorption. We assume that the physical domain $\Omega$ \YZ{is bounded and convex} with smooth boundary $\partial\Omega$. We focus on the following nonlinear radiative transport equation:
\begin{equation}\label{EQ: RTE 1}
\begin{aligned}
    \frac{\partial W_0}{\partial z} + \bk\cdot \nabla_{\bx} W_0 + \Sigma_a(\aver{W_0}) W_0+ \Sigma_s (W_0 - \cK W_0 ) &= 0\quad &\text{ in }&X,\\
    W_0(z, \bx, \bk) &= 0\quad&\text{ on }&\Gamma_{-},\\
    W_0(0, \bx, \bk) &= f(\bx)\quad &\text{ on }& X_0,
\end{aligned}
\end{equation}
where $X = (0, T)\times \Omega \times \bbS^{d-1}$, $\Gamma_{-} := \{(z, \bx, \bk) \in (0, T)\times \partial \Omega \times\bbS^{d-1}\mid \bn_{\bx}\cdot \bk < 0\}$, and $\bn_{\bx}$ is the unit outward normal vector at $\bx\in\partial \Omega$, $X_0 = \Omega \times \bbS^{d-1}$. 

We will focus on power spectra of the form $\widehat{R}(\bp - \bk) = \widehat{R}(\bp \cdot \bk)$, in which case the scattering coefficient $$\Sigma_s(\bk) \equiv \Sigma_s$$ is a constant, and the scattering phase function is $p(\bk, \bk') = 4\pi \widehat{R}(\bk \cdot \bk')/\Sigma_s$. 
Assume the scattering phase function $p(\bk\cdot \bk')$ is bounded below and above by positive constants $\underline{\theta}, \overline{\theta} \ge 0$ such that
\begin{equation}\label{EQ: THETA}
  \underline{\theta} \le  p(\bk, \bk') \le \overline{\theta}\,.
\end{equation}
Note that $\underline{\theta}$ satisfies the condition  $\nu_{d-1}\underline{\theta}\leq 1$, where $\nu_{d-1}$ is the measure of the ($d-1$) dimensional unit sphere. For simplicity, we require that the initial condition $f(\bx)\in L^{\infty}(\Omega\times\bbS^{d-1})$ is $\bx$-dependent and non-negative. The absorption coefficients obey the condition
\begin{equation}\label{EQ:Absorption Coeff}
   0< \underline{\Sigma_{a,l} }\le \Sigma_{a, l} (z, \bx) \le \overline{\Sigma_{a,l} } <\infty ,
\end{equation}
for some constants $\underline{\Sigma_{a,l} }$ and $\overline{\Sigma_{a,l}}$. 

We need the following lemma to have a well-posed problem.
\begin{lemma} [Kellogg~\cite{Kellogg-PAMS76}]\label{LEM: KEL}
Let $\cM$ be a bounded convex open subset of a real Banach space, and $F : \overline{
\cM} \to \overline{\cM}$ is a compact continuous map which is continuously Fr\'echet differentiable on \YZ{$\cM$}. If (i) for each $m\in \cM$, $1$ is not an eigenvalue of $F'(m)$, and (ii) for
each $m\in\partial \cM$, $m\neq F(m)$, then $F$ has a unique fixed point in $\cM$.
\end{lemma}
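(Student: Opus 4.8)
The plan is to recast the statement in the language of Leray--Schauder degree theory and to split it into an \emph{existence} part, handled by Schauder's theorem, and a \emph{uniqueness} part, which rests on a parity argument exploiting the crucial fact that hypothesis (i) holds at \emph{every} point of $\cM$, not merely at the fixed points. Throughout I write $\Phi(m)=m-F(m)$, so that fixed points of $F$ are exactly the zeros of $\Phi$ in $\overline{\cM}$.

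First I would settle existence and the preliminary structure of the fixed-point set. Since $\overline{\cM}$ is a closed, bounded, convex subset of a Banach space and $F:\overline{\cM}\to\overline{\cM}$ is compact and continuous, Schauder's fixed point theorem yields at least one fixed point, and hypothesis (ii) forces every fixed point into the open set $\cM$. At any fixed point $m^*$ the derivative $F'(m^*)$ is compact (the Fr\'echet derivative of a compact map is a compact operator), so $I-F'(m^*)$ is Fredholm of index zero; by hypothesis (i), $1\notin\operatorname{spec}(F'(m^*))$, whence $I-F'(m^*)$ is injective and therefore boundedly invertible. The inverse function theorem then makes $\Phi$ a local homeomorphism near $m^*$, so each fixed point is isolated. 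As the fixed-point set is a closed subset of the relatively compact set $\overline{F(\overline{\cM})}$ it is compact, and a compact set of isolated points is finite; call the fixed points $m_1,\dots,m_N\in\cM$.

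Next I would compute the total degree. Fix any interior point $c\in\cM$ and consider the homotopy $H(t,m)=m-tF(m)-(1-t)c$. It has no zero on $\partial\cM$: for $t<1$ a boundary zero would exhibit $m$ as a convex combination of $F(m)\in\overline{\cM}$ and the interior point $c$ with strictly positive weight on $c$, forcing $m\in\cM$, while $t=1$ is excluded by (ii). Homotopy invariance then gives $\deg(\Phi,\cM,0)=\deg(\,\cdot-c\,,\cM,0)=1$, and additivity of the degree yields $\deg(\Phi,\cM,0)=\sum_{i=1}^N\operatorname{ind}(m_i)$.

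The crux, and the step I expect to be the main obstacle, is to show that all the local indices coincide. The Leray--Schauder index of $m_i$ equals $(-1)^{\beta(m_i)}$, where $\beta(m)$ is the sum of the algebraic multiplicities of the eigenvalues of the compact operator $F'(m)$ lying in $(1,\infty)$. As $m$ ranges over $\cM$ these eigenvalues move continuously, and the parity $(-1)^{\beta(m)}$ can jump only when a real eigenvalue crosses the value $1$ -- precisely what hypothesis (i) forbids everywhere on $\cM$ -- so $m\mapsto(-1)^{\beta(m)}$ is locally constant there. Since $\cM$ is convex, hence connected, this parity is a single constant $\sigma\in\{+1,-1\}$, and in particular $\operatorname{ind}(m_i)=\sigma$ for every $i$. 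Combining with the degree computation gives $N\sigma=1$, which forces $\sigma=1$ and $N=1$, so the fixed point is unique. The delicate points demanding care are the validity of the index formula $(-1)^{\beta}$ in the infinite-dimensional compact setting, and the careful spectral bookkeeping showing that real eigenvalues can leave $(1,\infty)$ only through the value $1$ or in conjugate pairs that preserve parity -- it is exactly the global reach of hypothesis (i) across the connected set $\cM$ that upgrades the degree identity from a mere count to genuine uniqueness.
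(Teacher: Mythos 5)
The paper does not prove this lemma: it is quoted verbatim (with attribution) from Kellogg's 1976 note, and is used as a black box in the proof of Theorem~\ref{THM: UNIQUE}. Your argument is a correct reconstruction of essentially Kellogg's own degree-theoretic proof: Schauder for existence, hypothesis (ii) to push fixed points into $\cM$ and to admit the homotopy $m - tF(m) - (1-t)c$ giving total degree $1$, the Leray--Schauder index formula $(-1)^{\beta}$ at each (isolated, finitely many) fixed point, and the global reach of hypothesis (i) over the connected set $\cM$ to force all local indices to coincide, whence $N\sigma = 1$ and $N=1$. The one step you flag as delicate --- tracking how real eigenvalues enter and leave $(1,\infty)$ --- can be bypassed entirely: for $m_0, m_1 \in \cM$ joined by a path $m_t$ in $\cM$, the linear compact vector fields $x \mapsto x - F'(m_t)x$ are all injective by (i) and the Fredholm alternative, so homotopy invariance of the Leray--Schauder degree on a small ball gives directly that $(-1)^{\beta(m_0)} = (-1)^{\beta(m_1)}$, with no spectral bookkeeping needed.
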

We define the space, using $dS(\bx)$ to denote the surface measure of $\partial\Omega$, 
\begin{equation*}
\begin{aligned}
    \cW_p = \{ u\in L^p((0, T)\times \Omega\times  \bbS^{d-1} ); \frac{\partial u}{\partial z} + \bk\cdot \nabla u \in L^p((0, T)\times  \Omega\times \bbS^{d-1} ); \\
    u(0, \cdot, \cdot)\in L^p(\Omega\times\bbS^{d-1}); u|_{\Gamma_{-}}\in L^p( (0,T)\times \Gamma_{-}, |\bn_{\bx}\cdot \bk| dz\, d\bk dS(\bx))\}
\end{aligned}
\end{equation*}
and the equipped norm
\begin{equation*}
    \|u\|_{\cW_p}^p = \|u\|_{L^p((0, T)\times \Omega\times \bbR^{d-1})}^p +  \|\partial_z u + \bk\cdot \nabla u\|_{L^p((0, T)\times \Omega\times \bbR^{d-1})}^p.
\end{equation*}
\begin{theorem}\label{THM: UNIQUE}
Let the initial condition $f(\bx)$ satisfy:
\begin{equation*}
   (i)\qquad \|f\|_{\infty}\le  \frac{1}{\nu_{d-1}} \inf_{(z,\bx)\in (0,T)\times \Omega}\frac{\Sigma_{a, k-1} }{k \Sigma_{a, k} } , \quad \forall 1\le k\le L
\end{equation*}
or 
\begin{equation*}
         (ii)\qquad
         \YZ{\begin{cases}\nu_{d-1}\Sigma_a'(\|f\|_{\infty}) \|f\|_{\infty} \leq 2 \Sigma_s \underline{\theta}\\\nu_{d-1}\underline{\theta}\leq 1
         \end{cases}}, \quad \forall (z,\bx)\in (0, T)\times \Omega,
\end{equation*}
where $\Sigma_a'$ is the Fr\'echet derivative of $\Sigma_a$, that is, 
\begin{equation*}
    \Sigma_a'(m) = \YZ{\sum_{l=1}^L \Sigma_{a, l}(z, \bx) l m^{l-1}}\,.
\end{equation*}
Then equation~\eqref{EQ: RTE 1} admits a unique non-negative solution in $\cW_{\infty}((0, T)\times \Omega\times \bbS^{d-1})$.
\end{theorem}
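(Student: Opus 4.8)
The plan is to recast \eqref{EQ: RTE 1} as a fixed-point problem to which Kellogg's theorem (Lemma~\ref{LEM: KEL}) applies. Since the nonlinearity enters only through the scalar density $\aver{W_0}$, I would freeze that argument: given a candidate density $\rho = \rho(z,\bx)$, let $W[\rho]$ be the solution of the \emph{linear} transport problem obtained from \eqref{EQ: RTE 1} by replacing $\Sigma_a(\aver{W_0})$ with the known coefficient $\Sigma_a(\rho)$, keeping the same incoming and initial data, and define $F(\rho) := \aver{W[\rho]}$. A fixed point $\rho = F(\rho)$ yields $W = W[\rho]$ solving the semilinear equation, and conversely. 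The ambient Banach space is the space of densities inherited from $\cW_{\infty}$, and $\cM$ will be a bounded open convex set of nonnegative densities.

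First I would develop the linear theory for $W[\rho]$. For $\rho \ge 0$ the coefficient $\Sigma_a(\rho)\ge 0$ by \eqref{EQ:Poly Absorption}, so the frozen problem is a standard linear RTE, well-posed in $\cW_{\infty}$ by the method of characteristics combined with a contraction on the gain term $\Sigma_s\cK$. A maximum principle---using that $\cK$ is positivity preserving and angle averaging (so $\aver{\cK\psi}=\aver{\psi}$ and $\cK\psi\le\|\psi\|_{\infty}$), together with $f\ge 0$ and zero inflow on $\Gamma_{-}$---gives $0\le W[\rho]\le \|f\|_{\infty}$, whence $0\le F(\rho)\le M:=\nu_{d-1}\|f\|_{\infty}$. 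This identifies the invariant convex set $\cM$ and shows $F:\overline{\cM}\to\overline{\cM}$. Compactness of $F$ I would obtain from velocity averaging: $F(\rho)=\aver{W[\rho]}$ is an angular average of a transport solution and hence gains regularity over $\rho$, so that $F(\overline{\cM})$ is precompact once the a priori $L^\infty$ and transport bounds are in hand. Continuous Fr\'echet differentiability follows by differentiating the linear solution map in its coefficient: $F'(\rho)\delta\rho = \aver{V}$, where $V$ solves the same linear transport equation with source $-\Sigma_a'(\rho)\,\delta\rho\,W[\rho]$ and homogeneous data.

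The crux is Kellogg's hypothesis (i): that $1$ is never an eigenvalue of $F'(\rho)$ for $\rho\in\cM$. Equivalently, the only $\Phi$ solving
\[
(\partial_z + \bk\cdot \nabla_{\bx} + \Sigma_a(\rho) + \Sigma_s(I-\cK))\Phi = -\Sigma_a'(\rho)\,\aver{\Phi}\,W[\rho]
\]
with homogeneous inflow and initial data must be $\Phi\equiv 0$. I would test this identity against $\Phi$ and integrate over $X$. The transport term contributes nonnegative boundary and terminal terms; the absorption term contributes $\int\Sigma_a(\rho)\Phi^2\ge0$; and the scattering term is coercive with constant controlled from below by $\Sigma_s\underline{\theta}$ through the spectral gap of $I-\cK$ under \eqref{EQ: THETA}. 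The single sign-indefinite term is the coupling $\int\Sigma_a'(\rho)\,\aver{\Phi}\,W[\rho]\,\Phi$, and the whole argument reduces to absorbing it into the coercive terms. Using $W[\rho]\le\|f\|_{\infty}$, the Cauchy--Schwarz bound $\aver{\Phi}^2\le\nu_{d-1}\int_{\bbS^{d-1}}\Phi^2\,d\bk$, and $\rho\le M$, condition (ii) of the theorem is, up to the constant $\nu_{d-1}$ relating $\aver{\Phi}$ to its angular $L^2$ norm, exactly the balance that makes the coupling dominated by the scattering coercivity, forcing $\Phi$ to be angle independent and then zero. Condition (i) is the alternative, absorption-dominated route: summing the per-degree inequalities $k\Sigma_{a,k}M\le\Sigma_{a,k-1}$ yields $M\Sigma_a'(M)\le\Sigma_a(M)$, so the coupling is instead absorbed by the diagonal term $\int\Sigma_a(\rho)\Phi^2$. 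This coercivity bookkeeping, with the precise use of \eqref{EQ:Poly Absorption}, \eqref{EQ: THETA}, and \eqref{EQ:Absorption Coeff}, is where I expect the main difficulty.

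Finally, I would verify Kellogg's hypothesis (ii), that $\rho\neq F(\rho)$ on $\partial\cM$. Choosing $\cM$ with strict bounds (suitably interpreted at $0$ and at $M$), a boundary fixed point would force $W[\rho]$ to touch $0$ or its extremal value on a set of positive measure; the strong maximum principle for the transport equation, together with the strictly positive lower bounds $\underline{\Sigma_{a,l}}>0$ in \eqref{EQ:Absorption Coeff}, rules this out. With (i) and (ii) established, Lemma~\ref{LEM: KEL} yields a unique fixed point of $F$ in $\cM$, hence a unique solution of \eqref{EQ: RTE 1} in $\cW_{\infty}$; nonnegativity and the $L^\infty$ bound are already built into $\cM$ through the maximum principle.
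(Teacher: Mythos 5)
Your overall architecture coincides with the paper's: freeze the density, define $F(\rho)=\aver{W[\rho]}$ on a bounded convex set of nonnegative densities, use the maximum principle for invariance, velocity averaging for compactness, and Kellogg's lemma (with the Fr\'echet derivative realized as the angular average of a linearized transport solution) for uniqueness. Where you diverge is the crucial step, excluding $1$ from the spectrum of $F'(\rho)$: you propose an $L^2$ energy estimate with the spectral gap of $I-\cK$, whereas the paper works pointwise in $L^\infty$, writing the linearized solution by Duhamel along characteristics as $w=\int_0^{z\wedge\tau_-}\Sigma_t e^{-\int_0^s\Sigma_t}\,(R/\Sigma_t)\,ds$ with $R=\Sigma_s\cK w-\Sigma_a'(m)\aver{w}\phi$, and showing $\sup|R/\Sigma_t|\le\|w\|_\infty$ so that $\|w\|_\infty\le\gamma\|w\|_\infty$ with $\gamma<1$.

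Under condition (i) your route closes: the coupling term is bounded by $\nu_{d-1}\Sigma_a'(\rho)\|f\|_\infty\int\Phi^2\le\int\Sigma_a(\rho)\Phi^2$, it cancels against the absorption term, and Gr\"onwall with zero initial data gives $\Phi\equiv 0$ (you do not even need strict inequality there). Under condition (ii), however, there is a genuine gap. The coercivity $\Sigma_s\langle(I-\cK)\Phi,\Phi\rangle\ge\Sigma_s\nu_{d-1}\underline{\theta}\,\|\Phi-\bar\Phi\|_{L^2(\bbS^{d-1})}^2$ controls only the angular fluctuation, since $I-\cK$ annihilates constants; but the coupling $\Sigma_a'(\rho)\aver{\Phi}\int_{\bbS^{d-1}}W[\rho]\Phi\,d\bk$ contains the piece $\nu_{d-1}\Sigma_a'(\rho)\aver{W}\bar\Phi^2$ plus a cross term in $\bar\Phi\cdot\Phi^\perp$, and after Young's inequality you are left needing a uniform lower bound on a coefficient of $\bar\Phi^2$ of size comparable to $\Sigma_s\underline{\theta}$. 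Neither $\aver{W[\rho]}$ (which can be small) nor $\Sigma_a(\rho)$ (which is unrelated to $\Sigma_s\underline{\theta}$) supplies this, so the constants do not close to yield condition (ii) as stated; at best you obtain a different, quadratic smallness condition involving $\underline{\Sigma_{a,0}}$. The paper's pointwise argument avoids this entirely because the full collision frequency $\Sigma_t=\Sigma_a+\Sigma_s$ sits in the denominator along characteristics, and the algebraic splitting $R=\Sigma_s(\cK w-\underline{\theta}\aver{w})+(\Sigma_s\underline{\theta}-\Sigma_a'(m)\phi)\aver{w}$ exploits the kernel lower bound to get $|R|\le\Sigma_t\|w\|_\infty$ exactly when $0\le\Sigma_a'(m)\phi\le 2\Sigma_s\underline{\theta}$, which is condition (ii). You would need to replace your energy argument for case (ii) by this (or an equivalent pointwise) mechanism. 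Your treatment of the boundary condition in Kellogg's lemma is essentially the paper's and is fine, as is the Schauder-versus-Kellogg bookkeeping for existence.
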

\begin{proof}
Let $F: m\mapsto \phi$ be the map defined through the relation $\aver{\phi} = F m$, where $\phi$ solves the transport equation
\begin{equation}\label{EQ: RTE H}
\begin{aligned}
    \frac{\partial \phi}{\partial z} + \bk\cdot \nabla\phi +  \Sigma_a(m)  \phi + \Sigma_s \left(\phi - \cK \phi\right) &= 0\quad &\text{ in }&X,\\
 \phi (z, \bx, \bk) &= 0\quad&\text{ on }&\Gamma_{-},\\
    \phi (0, \bx, \bk) &= f(\bx)\quad &\text{ on }& X_0\,.
\end{aligned}
\end{equation}
We denote by $\cM$ the set
\begin{equation*}
    \cM = \{m \in L^{\infty}((0, T)\times \Omega)\cap L^2((0, T)\times \Omega)\mid 0\le m \le \|f\|_{\infty} + \delta\}
\end{equation*}
with $\delta > 0$ being arbitrary. It is straightforward to check that $\cM$ is convex, bounded, and closed under the usual $L^2$ topology. For any $m\in \cM$, we have that $\Sigma_a(m)$ is non-negative. Therefore, by the maximum principle for the linear transport equation~\eqref{EQ: RTE H} (see for instance~\cite{DaLi-Book93-6}), $\aver{\phi}\in\cM$. This shows that $F: \cM\to \cM$. To show that $F$ is a continuous operator, we denote by $\phi_1$ and $\phi_2$ the solutions to~\eqref{EQ: RTE H} corresponding to $m_1\in\cM$ and $m_2\in \cM$ respectively. We then introduce $w = \phi_1- \phi_2$. It is then clear that $w$ solves the linear transport equation:
\begin{equation}\label{EQ: RTE H w}
\begin{aligned}
    \frac{\partial w}{\partial z} + \bk\cdot \nabla w +  \Sigma_a(m_1) w + \Sigma_s \left(w - \cK w\right) &= (\Sigma_a(m_2) - \Sigma_a(m_1) ) \phi_2\quad &\text{ in }&X,\\
w (z, \bx, \bk) &= 0\quad&\text{ on }&\Gamma_{-},\\
   w (0, \bx , \bk) &=0\quad &\text{ on }& X_0\,.
\end{aligned}
\end{equation}
By standard linear theory~\cite{DaLi-Book93-6}, this equation admit a unique $w\in \cW_2$ such that
\begin{equation}
    \begin{aligned}
\|w\|_{\cW_2((0,T)\times \Omega\times \bbS^{d-1})}&\le C \| (\Sigma_a(m_2) - \Sigma_a(m_1) ) \phi_2\|_{L^2((0,T)\times \Omega\times\bbS^{d-1})}\\& \le C \Sigma_a'(\|f\|_{\infty}+\delta) \|f\|_{\infty} \|m_2 - m_1\|_{L^2((0,T)\times \Omega)}
    \end{aligned}
\end{equation}
for some constant $C > 0$. Using the averaging lemma~\cite{GoLiPeSe-JFA88}, we obtain that there exists a constant $C' > 0$ that 
\begin{equation}
\begin{aligned}
    \|Fm_1 - Fm_2\|_{H^{1/2}((0, T)\times \Omega)} &= \|\aver{w}\|_{H^{1/2}((0, T)\times \Omega)} \le C' \|w\|_{\cW_2((0,T)\times \Omega\times \bbS^{d-1})} \\
    &\le CC' \Sigma_a'(\|f\|_{\infty}+\delta) \|f\|_{\infty} \|m_2 - m_1\|_{L^2((0,T)\times \Omega)}.
\end{aligned}
\end{equation}
Combining this with the Kondrachov embedding theorem, we have shown that $F:\cM \to \cM$ is a continuous compact operator. By the Schauder fixed point theorem, there exists a fixed point for $\cM$, and hence~\eqref{EQ: RTE 1} has a non-negative solution.

To prove the uniqueness of the solution, we use  Lemma~\ref{LEM: KEL}. We first observe that for any fixed point $m^{\ast}$ of $F$, it must satisfy the conditions: $m^{\ast} \le \|f\|_{\infty} $ and $m^{\ast} > 0$. This is due to the fact that $f$ is strictly positive. Hence there are no fixed points on the boundary $\partial\cM$. Next, we show that $F'(m)$ cannot have $1$ as its eigenvalue. Let $\phi$ be the solution to~\eqref{EQ: RTE H} with $m\in\cM$ and $ \delta m$ be a perturbation that $m + \delta m\in\cM$, we denote by $w$ the solution to the following equation:
\begin{equation}\label{EQ: RTE H w 1}
\begin{aligned}
    \frac{\partial w}{\partial z} + \bk\cdot \nabla w +  \Sigma_a(m) w + \Sigma_s \left(w - \cK w\right) &= -\Sigma_a'(m)  \delta m  \phi \quad &\text{ in }&X,\\
w (z, \bx, \bk) &= 0\quad&\text{ on }&\Gamma_{-},\\
   w (0, \bx , \bk) &=0\quad &\text{ on }& X_0\,.
\end{aligned}
\end{equation}
Then the Fr\'echet derivative $F'(m)[\delta m] = \aver{w}$. Suppose $F'(m)$ indeed has $1$ as its eigenvalue and $\aver{w}$ as the corresponding nonzero eigenfunction. Then $F'(\aver{w}) = \aver{w}$ and
\begin{equation}\label{EQ: RTE H w 2}
\begin{aligned}
         \frac{\partial w}{\partial z} + \bk\cdot \nabla w +  \Sigma_a(m) w + \Sigma_s \left(w - \cK w\right) 
         &= -\Sigma_a'(m)  \aver{w}  \phi \quad &\text{ in }&X,\\
w (z, \bx, \bk) &= 0\quad&\text{ on }&\Gamma_{-},\\
   w (0, \bx , \bk) &=0\quad &\text{ on }& X_0\,.
\end{aligned}
\end{equation}
Using the notations $\Sigma_t = \Sigma_a(m) + \Sigma_s$ and $R = \Sigma_s \cK w - \Sigma_a'(m) \aver{w} \phi$, we can write the solution to ~\eqref{EQ: RTE H w 2} as
\begin{equation*}
\begin{aligned}
   w(z, \bx, \bk)  &= \int_0^{z \wedge \tau_{-}(\bx, \bk) } \exp^{-\int_0^s \Sigma_t (z-l, \bx - l\bk)dl} R(z - s, \bx-s\bk, \bk) ds \\
    &= \int_0^{z \wedge \tau_{-}(\bx, \bk) } \Sigma_t(z-s, \bx - s\bk )e^{-\int_0^s \Sigma_t (z-l, \bx - l\bk)dl}\frac{R(z - s, \bx-s\bk, \bk)}{ \Sigma_t(z-s, \bx - s\bk)} ds\,,
\end{aligned}
\end{equation*}
which then gives the bound
\begin{equation}\label{EQ:Bound Sol}
\begin{aligned}
    |w(z, \bx, \bk)| &\le \int_0^{z \wedge \tau_{-}(\bx, \bk) } \Sigma_t(z-s, \bx - s\bk )e^{-\int_0^s \Sigma_t (z-l, \bx - l\bk)dl}\left|\frac{R(z - s, \bx-s\bk, \bk)}{ \Sigma_t(z-s, \bx - s\bk)}\right| ds \\
    &\le \left(1 -  e^{-\int_0^{z \wedge \tau_{-}(\bx, \bk)} \Sigma_t (z-l, \bx - l\bk)dl}  \right) \sup_{0\le s\le z \wedge \tau_{-}(\bx, \bk)  } \left|\frac{R(z - s, \bx-s\bk, \bk)}{ \Sigma_t(z-s, \bx - s\bk)}\right|  \\
    &\le  \gamma \sup_{(0, T)\times \Omega\times \bbS^{d-1} } \left|\frac{R(z, \bx, \bk)}{ \Sigma_t(z, \bx)}\right|  
\end{aligned}
\end{equation}
for some $0< \gamma < 1$. Here $a\wedge b = \min(a, b)$ and $\tau_{-}(\bx, \bk)$ is the distance from $\bx$ to $\Gamma_{-}$ in the direction of $-\bk$. The next step is to show that $ \sup_{(0, T)\times \Omega\times \bbS^{d-1} } \left|\frac{R(z, \bx, \bk)}{ \Sigma_t(z, \bx)}\right|  \le \|w\|_{\infty}$, which, when combined with the bound in~\eqref{EQ:Bound Sol}, leads to the bound $|w(z, \bx, \bk)| \le \gamma \|w\|_{\infty}$ (and hence $w = 0$). This contradicts the assumption that $\aver{w}$ is the eigenfunction corresponding to the eigenvalue $1$ of $F'$. We derive the bound under the two assumptions in the theorem.
\begin{enumerate}
    \item [(a)] When condition (i) is satisfied, we deduce from it that
    \[
    	\nu_{d-1}\Sigma_a'(m) \|f\|_{\infty} \le \Sigma_a(m)\,.
    \]
    Meanwhile, we also have that
    \begin{equation*}\label{EQ: R}
        |R(z, \bx, \bk) |\le \Sigma_s \|w\|_{\infty} + \nu_{d-1}\Sigma_a'(m) \|f\|_{\infty} \|w\|_{\infty},
    \end{equation*}
    Combining the above two bounds gives us
\begin{equation*}
    |R(z, \bx, \bk)| \le ( \Sigma_s + \Sigma_a(m) )\|w\|_{\infty} = \Sigma_t  \|w\|_{\infty}.
\end{equation*}
Therefore, we have $ \sup_{(0, T)\times \Omega\times \bbS^{d-1} } \left|\frac{R(z, \bx, \bk)}{ \Sigma_t(z, \bx)}\right|  \le \|w\|_{\infty}$.
\item [(b)] For the case when condition (ii) is satisfied, we first observe that
\begin{equation*}
    R(z, \bx, \bk) = \Sigma_s \left( \cK w - \underline{\theta} \aver{w} \right) + \left( \Sigma_s \underline{\theta} -  \Sigma_a'(m) \phi \right) \aver{w}
\end{equation*}
implies
\begin{equation}\label{EQ:R Bound}
   \left| \frac{R(z, \bx, \bk)}{\Sigma_t(z, \bx)} \right| \le \frac{\Sigma_s (1 - \nu_{d-1}\underline{\theta}) + \nu_{d-1}|\Sigma_s \underline{\theta} - \Sigma_a'(m)\phi|}{\Sigma_t(z, \bx)} \|w\|_{\infty}.
\end{equation}
When $\|f\|_{\infty}$ satisfies
\begin{equation}\label{EQ: COND}
    \|f\|_{\infty} \le \frac{1}{\nu_{d-1}}\frac{2 \Sigma_s \underline{\theta}}{\Sigma_a'(\|f\|_{\infty} + \delta)}, \quad \forall (z,\bx)\in (0, T)\times \Omega, 
\end{equation}
we obtain that $ \Sigma_s (1 - \nu_{d-1}\underline{\theta}) + \nu_{d-1}|\Sigma_s \underline{\theta} - \Sigma_a'(m)\phi| \le    \Sigma_s $ which, when combined with~\eqref{EQ:R Bound}, implies that
\begin{equation*}
    \left| \frac{R(z, \bx, \bk)}{\Sigma_t(z, \bx)} \right| \le \|w\|_{\infty}.
\end{equation*}
Since $\delta > 0$ is arbitrary, taking $\delta \to 0$, the condition~\eqref{EQ: COND} becomes
\begin{equation*}
       \|f\|_{\infty} \leq \frac{1}{\nu_{d-1}}\frac{2 \Sigma_s \underline{\theta}}{\Sigma_a'(\|f\|_{\infty})}, \quad \forall (z,\bx)\in (0, T)\times \Omega\,,
\end{equation*}
which is simply (ii).
\end{enumerate}
The proof is now complete.
\end{proof}

To study the diffusion limit, we introduce a small parameter $\epsilon$ and denote by $W_{\epsilon}$ the solution to the scaled radiative transport equation:
\begin{equation}\label{EQ:RTE}
\begin{aligned}
\frac{\partial W_{\epsilon}}{\partial z} + \frac{1}{\epsilon} \bk\cdot \nabla W_{\epsilon} +  \Sigma_a(\aver{W_{\epsilon}})  W_{\epsilon} + \frac{1}{\epsilon^2}\Sigma_s \left(W_{\epsilon} - \cK W_{\epsilon}\right) &= 0\quad &\text{ in }&X,\\
  W_{\epsilon}(z, \bx, \bk) &= 0\quad&\text{ on }&\Gamma_{-},\\
    W_{\epsilon}(0,\bx,  \bk) &= f(\bx)\quad &\text{ on }& X_0\,.
\end{aligned}
\end{equation}
We have the following corollary using condition (ii) in Theorem~\ref{THM: UNIQUE}.
\begin{corollary}
If $\epsilon$ is sufficiently small such that 
\begin{equation*}
    \Sigma_a'(\|f\|_{\infty})\|f\|_{\infty} \leq \frac{1}{\nu_{d-1}}\frac{2  \underline{\theta} \Sigma_s  }{\epsilon^2},
\end{equation*}
then ~\eqref{EQ:RTE} admits a unique non-negative solution in $\cW_{\infty}((0, T)\times \Omega\times \bbS^{d-1})$. Moreover, the solution satisfies $\|W_{\epsilon}\|_{\infty} \le \|f\|_{\infty}$.
\end{corollary}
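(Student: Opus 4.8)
The plan is to read off the corollary from Theorem~\ref{THM: UNIQUE} by treating the scaled equation~\eqref{EQ:RTE} as an instance of~\eqref{EQ: RTE 1} with rescaled coefficients. Comparing the two, the only differences are that the advection $\bk\cdot\nabla$ is replaced by $\tfrac{1}{\epsilon}\bk\cdot\nabla$ and the scattering coefficient $\Sigma_s$ by the larger constant $\Sigma_s/\epsilon^2$; the absorption nonlinearity $\Sigma_a$, the scattering operator $\cK$ (hence the phase-function bounds $\underline{\theta},\overline{\theta}$), and the data are untouched. Since $\epsilon>0$ is fixed, I would rerun the proof of Theorem~\ref{THM: UNIQUE} for these coefficients and verify only that its sufficient condition (ii) survives the substitution $\Sigma_s\mapsto\Sigma_s/\epsilon^2$.

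For the existence half I would rebuild the fixed-point map $F_\epsilon:m\mapsto\aver{\phi}$, with $\phi$ solving the scaled linear analogue of~\eqref{EQ: RTE H}, on the same set $\cM=\{0\le m\le\|f\|_{\infty}+\delta\}$. All three steps carry over verbatim: the maximum principle keeps $F_\epsilon:\cM\to\cM$ (a constant is annihilated by the scattering operator, $\cK c=c$, so this step is insensitive to $\Sigma_s\mapsto\Sigma_s/\epsilon^2$); the $\cW_2$ estimate together with the averaging lemma still give continuity and compactness of $F_\epsilon$, the $1/\epsilon$ factors only inflating the constants, which is harmless at fixed $\epsilon$; and Schauder's theorem then produces a non-negative fixed point.

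For uniqueness I would again apply Kellogg's Lemma~\ref{LEM: KEL}. The boundary exclusion is identical, using the strict positivity of $f$. For the eigenvalue condition, the characteristic representation of the linearized solution $w$ gives the same contraction~\eqref{EQ:Bound Sol}, namely $|w|\le\gamma\sup|R/\Sigma_t|$ with $\gamma=1-e^{-\int\Sigma_t}<1$, now along the scaled characteristics $\bx-\tfrac{s}{\epsilon}\bk$ and with $\Sigma_t=\Sigma_a(m)+\Sigma_s/\epsilon^2>0$, so $\gamma<1$ still holds. The decisive pointwise estimate is the analogue of case (b): writing $R=\tfrac{\Sigma_s}{\epsilon^2}(\cK w-\underline{\theta}\aver{w})+(\tfrac{\Sigma_s}{\epsilon^2}\underline{\theta}-\Sigma_a'(m)\phi)\aver{w}$ and using $\|\phi\|_{\infty}\le\|f\|_{\infty}$, the requirement $\sup|R/\Sigma_t|\le\|w\|_{\infty}$ reduces, exactly as in~\eqref{EQ: COND}, to $\nu_{d-1}\Sigma_a'(\|f\|_{\infty})\|f\|_{\infty}\le 2\underline{\theta}\Sigma_s/\epsilon^2$, which is precisely the hypothesis of the corollary. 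This forces $w=0$, rules out the eigenvalue $1$, and yields uniqueness. The final bound $\|W_\epsilon\|_{\infty}\le\|f\|_{\infty}$ follows by comparing $W_\epsilon$ with the constant supersolution $\|f\|_{\infty}$ for the frozen-coefficient linear equation, since $\Sigma_a(\aver{W_\epsilon})\ge0$.

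The only point needing care---not a real obstacle---is confirming that no estimate in Theorem~\ref{THM: UNIQUE} tacitly demanded $\epsilon$-uniform constants: because the corollary is asserted at each fixed $\epsilon$, the blow-up of the averaging-lemma and characteristic constants as $\epsilon\to0$ is irrelevant, and the sole $\epsilon$-dependence to track is the explicit $\Sigma_s/\epsilon^2$ in condition (ii). The substance of the corollary is then simply that stronger scattering (smaller $\epsilon$) relaxes the smallness condition (ii), so that it is automatically met once $\epsilon$ is small enough.
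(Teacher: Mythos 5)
Your proposal is correct and follows exactly the route the paper intends: the paper offers no separate proof, simply asserting the corollary ``using condition (ii) in Theorem~\ref{THM: UNIQUE},'' i.e., applying that theorem to the scaled equation with $\Sigma_s$ replaced by $\Sigma_s/\epsilon^2$, which turns condition (ii) into the stated hypothesis. Your verification that the fixed-point, Kellogg, and maximum-principle steps survive the rescaling, and that the $L^\infty$ bound follows from comparison with the constant supersolution, supplies precisely the details the paper leaves implicit.
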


\subsection{Asymptotic expansion}

Let $\epsilon$ be sufficiently small such that~\eqref{EQ:RTE} admits a unique solution. We formally expand the solution in powers of $\epsilon$: 
\begin{equation}
    W_{\epsilon}(z, \bx, \bv) = W_0 (z, \bx, \bv) + \epsilon W_1(z, \bx, \bv) + \epsilon^2 W_2(z, \bx, \bv) + \phi_{\epsilon}(z, \bx, \bv)\,.
\end{equation}
\YZ{Let $\cI$ denote the identity operator. We then substitute the above expansion into~\eqref{EQ:RTE}. Matching the equations at orders $\epsilon^{-2}$, $\epsilon^{-1}$ and $\epsilon^0$ gives the system:}
\begin{equation}\label{EQ: DIFF}
\begin{aligned}
     (\cI - \cK) W_0 &= 0, \\
     \Sigma_s(\cI - \cK) W_1 + \bk\cdot \nabla W_0 &= 0,\\
     \frac{\partial W_0}{\partial z} + \Sigma_s(\cI - \cK) W_2 + \bk\cdot \nabla W_1 + \Sigma_a ({W_0}) W_0 &= 0\,.
\end{aligned}
\end{equation}
Following standard procedures~\cite{CaSc-Book21, DaLi-Book93-6}, we obtain from the first two equations that
\begin{equation}
    W_0(z, \bx, \bk) = W_0(z, \bx), \quad W_1(z, \bx, \bk) = -\sum_{i=1}^d\frac{D_i(\bk)}{\Sigma_s} \partial_{x_i} W_0(z, \bx)\,,
\end{equation}
where $D_i(v)\in L^{\infty}(\bbS^{d-1})$ are the unique solutions to 
\begin{equation}
    (\cI - \cK) D_i(\bk) = \bk\cdot \be_i,\quad \int_{\bbS^{d-1}}D_i(\bk) d\bk = 0\,,\quad i = 1,2, \dots, d.
\end{equation}
Next, we integrate the third equation in~\eqref{EQ: DIFF} over $\bbS^{d-1}$, and utilize the fact that 
\[
    \aver{(\cI-\cK)W_2} = 0
\]
to get the equation for $W_0$. This leads to 
\begin{equation*}
   \int_{\bbS^{d-1}} \left( \frac{\partial W_0}{\partial z} + \left( \bk\cdot \nabla W_1 + \Sigma_a(W_0) W_0 \right) \right)   d\bk = 0\,.
\end{equation*}
Since $W_0$ is independent of $\bk$, we have that $W_0$ solves the diffusion equation:
\begin{equation}\label{EQ: DIFFUSION}
    \begin{aligned}  \frac{\partial W_0}{\partial z} - \nabla \cdot \left( \frac{A}{\Sigma_s} \nabla W_0 \right)  + \Sigma_a (W_0) W_0 &= 0, \quad &\text{in }& (0,T)\times \Omega, \\
     W_0(z, \bx) &= 0,\quad&\text{ on }&(0,T)\times \partial \Omega,\\
    W_0(0, \bx) &= f(\bx)\quad &\text{ on }&  \Omega,
    \end{aligned}
\end{equation}
where the matrix $A$ is positive definite and is defined as
\begin{equation*}
    A_{ij} = \int_{\bbS^{d-1}} \bk \cdot \be_i D_j(\bk) d\bk\,.
\end{equation*}
Under our assumption that the scattering phase function $p(\bk\cdot \bk')$ is rotation invariant, we have that $A_{ij} = \frac{1}{d(1-g)}\delta_{ij}$, where $g\in(-1,1)$ is the anisotropy parameter. 

Let $\phi_{\epsilon}$ be the remainder in the expansion. Then we can check that $\phi_{\epsilon}$ satisfies the equation:
\begin{equation}\label{EQ: PHI}
\begin{aligned}
    \partial_z \phi_{\epsilon} + \frac{1}{\epsilon}\bk \cdot \nabla \phi_{\epsilon} +  \frac{\Sigma_s}{\epsilon^2}\left(\cI - \cK\right)\phi_{\epsilon} +\Sigma_a(\aver{W_{\epsilon}} )\phi_{\epsilon} &= \epsilon h_1,\; &\text{ in }& X\\
  \phi_{\epsilon}(z, \bx, \bk) &= \epsilon h_2,\quad&\text{ on }&\Gamma_{-},\\
    \phi_{\epsilon}(0,\bx,  \bk) &= \epsilon h_3, \quad &\text{ on }& X_0\,,
\end{aligned}
\end{equation}
where the functions $h_1$, $h_2$,  and $h_3$ are respectively of the forms:
\begin{equation*}
\begin{aligned}
h_1(z, \bx, \bk) &= -\partial_z W_1 -\bk\cdot \nabla W_2 - \Sigma_a(\aver{W_{\epsilon}}) ( W_1 + \epsilon W_2) \\&\quad - \epsilon \partial_{z} W_2 + \frac{1}{\epsilon}\left[\Sigma_a(W_0) - \Sigma_a(\aver{W_{\epsilon}})\right] W_0 ,\\
h_2(z, \bx,\bk) & = -\sum_{i=1}^d\frac{D_i(\bk)}{\Sigma_s} \frac{\partial W_0}{\partial x_i} - \epsilon W_2, \\
h_3(0, \bx, \bk) &=  -W_1 - \epsilon W_2\,.
\end{aligned}
\end{equation*}
Let $\delta := \frac{1}{2}\inf_{[0, T]\times \Omega} \Sigma_{a, 0}(z, \bx)$. Then by the assumtion in~\eqref{EQ:Absorption Coeff}, $\delta>0$. We then have that for any $z\in [0, T)$,
\begin{equation}\label{EQ: PHI EST}
\begin{aligned}
    \|\phi_{\epsilon} (z, \cdot, \cdot) \|_{\infty} &\le \epsilon \|h_3\|_{\infty} e^{-\delta z} + \epsilon \int_0^{z} e^{-\delta (z- s)}(\|h_1\|_{\infty} + \delta \|h_2\|_{\infty}) ds \\
    &\le \epsilon  T (\|h_1\|_{\infty} + \delta \|h_2\|_{\infty})   + \epsilon\|h_3\|_{\infty}.
\end{aligned}
\end{equation}
It remains to show that $h_1$, $h_2$, and $h_3$ are bounded.  We first observe from the equations in~\eqref{EQ: DIFF} that 
\begin{equation*}
    \|W_1\|_{\infty} \le C_1 \| W_0\|_{C([0,T), C^1(\Omega))}, \quad \|W_2\|_{\infty} \le C_2 \|W_0\|_{C([0,T), C^2(\Omega))}
\end{equation*}
We then take the derivative with respect to $z$ of the equations in~\eqref{EQ: DIFF} to deduce that
\begin{equation*}
 \|\partial_z W_1\|_{\infty} \le C_3 \|W_0\|_{C([0,T), C^3(\Omega))},\quad \|\partial_z W_2\|_{\infty} \le C_4 \|W_0\|_{C([0,T), C^4(\Omega))}    
\end{equation*}
together with 
\[
\|\bk\cdot \nabla W_2\|_{\infty} \le C_5 \|W_0\|_{C([0,T), C^3(\Omega))}\,.
\]
Therefore, given that $W_0\in C([0, T), C^4(\Omega))$, we have the following estimate for the diffusion approximation:
\begin{equation*}
\|W_{\epsilon} - W_0\|_{\infty}\le \|\phi_{\epsilon}\|_{\infty}+  \epsilon \|W_1\|_{\infty} + \epsilon^2 \|W_2\|_{\infty} = C(T, \|W_0\|_{C([0, T), C^4(\Omega)) }) \epsilon.
\end{equation*}
To ensure the regularity of the solution $W_0$, at least for a short time, we simply need the initial condition $f$ to be smooth enough since $W_0$ would be smoother than the initial condition due to the diffusive nature.

We can prove the following result.
\begin{theorem}\label{THM: REG}
	Assume that $f\in C^{4}_0(\Omega)$ is non-negative, and the absorption and scattering coefficients satisfy the condition
	\[
   		\Sigma_{a,l}\in C^{2}(\Omega),\quad \Sigma_{a, l}\ge 0, \quad \Sigma_s > 0.
	\]
	Then the diffusion equation~\eqref{EQ: DIFFUSION} admits a unique strong solution $W_0\in C([0, T), C^{4}(\Omega))$ when $1\le d\le 3$. 
\end{theorem}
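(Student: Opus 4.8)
The plan is to treat \eqref{EQ: DIFFUSION} as a semilinear parabolic Cauchy problem and apply the standard sectorial-operator (analytic semigroup) machinery, taking advantage of two structural features. First, under the stated rotational invariance the principal part reduces to the \emph{constant-coefficient} operator $-D\Laplace$ with $D:=\frac{1}{d(1-g)\Sigma_s}>0$ and homogeneous Dirichlet data, which generates an analytic, contractive, smoothing semigroup $e^{zD\Laplace}$. Second, the reaction term
\begin{equation*}
	\Sigma_a(W_0)W_0=\sum_{l=0}^{L}\Sigma_{a,l}(\bx)\,W_0^{\,l+1}
\end{equation*}
is a polynomial in $W_0$ with $C^2(\Omega)$ coefficients, and it is nonnegative whenever $W_0\ge 0$. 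I would first recast the problem in Duhamel (mild) form
\begin{equation*}
	W_0(z)=e^{zD\Laplace}f-\int_0^{z}e^{(z-s)D\Laplace}\,\Sigma_a(W_0(s))W_0(s)\,ds.
\end{equation*}

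For local existence and uniqueness I would run a contraction-mapping argument in $C([0,\tau];X)$, where $X$ is a Banach algebra continuously embedded in $C^0(\overline\Omega)$; since $1\le d\le 3$ one may take $X=H^s(\Omega)$ with $s>d/2$ (or simply $X=C^0(\overline\Omega)$). On such an $X$ the map $W\mapsto\Sigma_a(W)W$ is locally Lipschitz, the $C^2$ regularity of the coefficients being more than sufficient here, so for $\tau$ small the Duhamel map is a contraction on a closed ball and yields a unique local mild solution. Uniqueness on the whole interval of existence then follows from the local Lipschitz estimate and Gr\"onwall's inequality.

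To continue the solution up to $z=T$ I would establish an a priori $L^\infty$ bound by the parabolic comparison principle. Because $f\ge 0$ and $\Sigma_a(m)m\ge 0$ for all $m\ge 0$, the constant $\|f\|_\infty$ is a supersolution and $0$ a subsolution of \eqref{EQ: DIFFUSION}; hence $0\le W_0\le\|f\|_\infty$ wherever the solution exists. This uniform bound precludes finite-time blow-up of the $X$-norm, so the local solution extends to a global solution on $[0,T)$ satisfying $\|W_0\|_\infty\le\|f\|_\infty$.

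The remaining, and principal, task is to upgrade this solution to the class $C([0,T),C^4(\Omega))$, and this is where I expect the main difficulty to lie, since the absorption coefficients are only $C^2$. Here I would exploit that the principal part is constant-coefficient: parabolic smoothing (equivalently H\"older Schauder estimates for the heat operator, or $L^2$ maximal regularity followed by the embedding $H^4\hookrightarrow C^2$, valid precisely for $d<4$) gains two spatial orders at each step, while the regularity of the source $\Sigma_a(W_0)W_0$ is controlled by $\min(\mathrm{reg}(W_0),2)$ because of the $C^2$ coefficients. Bootstrapping from the $L^\infty$ bound, the homogeneous term $e^{zD\Laplace}f$ carries the full $C^4$ regularity of $f\in C^4_0(\Omega)$ down to $z=0$ (the boundary vanishing of $f$ supplying the compatibility needed for continuity at $z=0$), while the Duhamel term gains two orders over the reaction; tracking the H\"older exponents carefully, one reaches $W_0\in C([0,T),C^4(\Omega))$. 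The delicate point is exactly this matching of orders: the $C^2$ coefficients are the minimal regularity for which the constant-coefficient Duhamel integral, fed a source that is itself $C^2$ as soon as $W_0$ is, returns a $C^4$ contribution, and it is the restriction $1\le d\le 3$ that keeps the associated Sobolev embeddings available to close the argument.
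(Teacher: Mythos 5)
Your proposal is correct in outline and shares the same skeleton as the paper's proof --- analytic-semigroup local well-posedness for the semilinear parabolic problem, the comparison principle giving $0\le W_0\le\|f\|_\infty$ and hence global existence on $[0,T)$, and a regularity bootstrap to $C^4$ --- but the bootstrap step is carried out differently. You propose to gain the extra two derivatives by parabolic smoothing of the Duhamel integral, iterating Schauder-type estimates on $\int_0^z e^{(z-s)D\Laplace}\Sigma_a(W_0)W_0\,ds$. The paper instead cites Pazy's strong-solution theorems to get $W_0\in C^1((0,T),L^2)\cap C((0,T),H^2\cap H^1_0)$ with $\partial_z W_0$ locally H\"older in time, then \emph{differentiates the equation in $z$}: the derivative $\psi=\partial_z W_0$ solves a linear parabolic problem with initial datum $-Lf-\Sigma_a(f)f$, so the same argument gives $\psi(z,\cdot)\in C^2(\Omega)$; finally, viewing the PDE at each fixed $z$ as the \emph{elliptic} equation $LW_0=-\partial_z W_0-\Sigma_a(W_0)W_0$ with a $C^2$ right-hand side, elliptic regularity yields $W_0(z,\cdot)\in C^4(\Omega)$. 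The paper's device buys a cleaner separation of time and space regularity: it avoids the bookkeeping of parabolic H\"older exponents in the Duhamel term (where the naive operator bound $\|e^{tD\Laplace}\|_{C^\alpha\to C^{2+\alpha}}\sim t^{-1}$ is not integrable, so you genuinely need H\"older-in-time control of the source at each stage of your iteration --- the part you flag as ``tracking the H\"older exponents carefully''). Your route buys self-containedness (an explicit contraction rather than citations to abstract strong-solution theorems) and makes the role of $1\le d\le 3$ via Sobolev embedding transparent. One caveat applies to both arguments equally: passing from a $C^2$ (respectively continuous) source to a $C^4$ (respectively $C^2$) solution by elliptic or parabolic regularity strictly requires a H\"older modulus at the top order, since Schauder estimates fail at integer exponents; the paper glosses over this exactly as you do, so it is not a defect specific to your approach, but if you write this up you should carry a $C^{k,\alpha}$ index through the bootstrap.
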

\begin{proof}
Let $L = -\nabla\cdot \left(\frac{A}{\Sigma_s}\nabla\right)$. Then $-L$ is the infinitesimal generator of an analytic semigroup $G(t)$ on $L^2(\Omega)$ and $\|G(t)\|\le 1$ for all $t\ge 0$. We denote 
\begin{equation*}
    \cD(L) =  H^2(\Omega)\cap H^1_0(\Omega).
\end{equation*}
By~\cite[Theorem 8.4.4]{Pazy-Book12} and~\cite[Theorem 6.3.1]{Pazy-Book12}, we have that when $f\in \cD(L)$, there exists a unique local strong solution $W_0$ to the equation~\eqref{EQ: DIFFUSION} on $(0, T)\times \Omega$, that is,
\begin{equation}
    W_0\in C([0, T), L^2(\Omega))\cap C((0, T), H^2(\Omega)\cap H^1_0(\Omega)) \cap C^1((0, T), L^2(\Omega))\,.
\end{equation}
Moreover, $0\le W_0 \le \|f\|_{\infty}$ by the comparison principle. Then the result of ~\cite[Corollary 6.3.2]{Pazy-Book12} ensures that $\partial_z W_0$ is locally H\"older continuous for $z\in(0, T)$. Hence $W_0(z, \bx)$ and $\partial_z W_0(z, \bx)$ are both continuous on $(0,T)\times \overline{\Omega}$. This means that $\Sigma_a(W_0)$ is also continuous. Therefore we must have $W_0(z, \cdot)\in C^{2}(\Omega)$, which means $W_0$ is a classical solution. 

Let $g(\bx) :=  -Lf - \Sigma_a(f)$, $f \in C^2(\Omega)\in \cD(L)$. By differentiating~\eqref{EQ: DIFFUSION}, we find that $\psi := \partial_z W_0$ satisfies the following equation:
\begin{equation}
        \begin{aligned}
      \partial_z \psi + L \psi + \left( \Sigma_a'(W_0) W_0 +  \Sigma_a(W_0) \right) \psi &= 0\quad &\text{ in }&(0,T)\times \Omega,\\
  \psi(z, \bx) &= 0\quad&\text{ on }&(0, T)\times\partial \Omega,\\
    \psi(0,\bx) &= g(\bx)\quad &\text{ on }& \{0\}\times \Omega.
\end{aligned}   
\end{equation}
Following a similar process, we can deduce that $\psi(z, \cdot) \in C^{2}(\Omega)$. Since $W_0(z,\cdot)\in C^2(\Omega)$ and  $\partial_z W_0(z,\cdot)\in C^2(\Omega)$ for $z\in (0, T)$, we have $\partial_z W_0(z,\cdot) + \Sigma_a(W_0(z,\cdot)) W_0(z,\cdot)\in C^2(\Omega)$. By classical regularity theory for elliptic equations~\cite{GiTr-Book00}, $W_0(z,\cdot)\in C^4(\Omega)$.
\end{proof}


\begin{remark}
We have assumed so far that the initial condition $f$ is independent of the variable $\bk$. In fact, the case of $f$ depending on $\bk$, that is, $f = f(\bx, \bk)$, can be treated in a similar manner by introducing another fast variable $\theta = \frac{z}{\epsilon^2}$, as in \cite[Section~XXI.5.3]{DaLi-Book93-6}. We will not repeat the calculations here.
\end{remark}

\subsection{The case of degenerate coefficients}

Let us now briefly consider the case when the problem is degenerate, that is, when the absorption coefficient can vanish in part of the domain of interest. More precisely, we relax the requirement that all $\Sigma_{a,l} > 0$ to the following:
\begin{equation}
   \Sigma_{a,l} \ge 0, \quad  \Sigma_a (\|f\|_{\infty}) > 0, \quad \forall (z, \bx)\in [0, T]\times \Omega.
\end{equation}
In this case, $\Sigma_a'(\|f\|_{\infty}) > 0$. When $\epsilon$ is sufficiently small, the scaled transport equation~\eqref{EQ:RTE} admits a unique solution in $L^{\infty}(X)$. Let $w_\epsilon$ be the solution to the following linear transport equation:
\begin{equation}\label{EQ: W}
    \begin{aligned}
    \partial_z w_{\epsilon} + \frac{1}{\epsilon}\bk\cdot \nabla w_{\epsilon} + \Sigma_a (\|f\|_{\infty}) w_{\epsilon} + \frac{\Sigma_s}{\epsilon^2 }(\cI-\cK)w_{\epsilon} &= 0\quad &\text{ in }&X,\\
  w_{\epsilon}(z, \bx, \bk) &= 0\quad&\text{ on }&\Gamma_{-},\\
    w_{\epsilon}(0,\bx,  \bk) &= f(\bx)\quad &\text{ on }& X_0.
\end{aligned}
\end{equation}
Since the absorption coefficient $\Sigma_a (\|f\|_{\infty}) \ge \Sigma_a(\aver{W_{\epsilon}})$, we conclude that $w_{\epsilon} \le W_{\epsilon}$ for any $\epsilon > 0$. On the other hand, as $\epsilon \to 0$, we have $w_{\epsilon} \to w_0$, where $w_0$ is the solution to
\begin{equation}\label{EQ: W 1}
    \begin{aligned}
    \partial_z w_{0} - \nabla\cdot \left(\frac{A}{\Sigma_s} \nabla w_0\right) +\Sigma_a (\|f\|_{\infty}) w_0 &= 0\quad &\text{ in }&(0, T)\times \Omega,\\
  w_{0}(z, \bx) &= 0\quad&\text{ on }&(0, T)\times \partial \Omega,\\
    w_{0}(0,\bx) &= f(\bx)\quad &\text{ on }& \Omega.
\end{aligned}
\end{equation}
Hence $\|W_{\epsilon}\|\ge \|w_{\epsilon}\|\ge \inf_{[0, T]\times \Omega} w_0 - C\epsilon$ for some $C > 0$. Because $\inf_{(0, T)\times \Omega} w_0 
> 0$ strictly for $\epsilon$ sufficiently small, we conclude that $W_{\epsilon}$ is bounded from below by a positive number, which implies $\Sigma_a(\aver{W_{\epsilon}})$ is also strictly positive. Then repeat the process in~\eqref{EQ: PHI EST} by setting $\delta = \frac{1}{2}\inf_{(0, T)\times \Omega} \Sigma_a(\aver{W_{\epsilon}})$ instead, we obtain the same conclusion that $\|W_{\epsilon} - W_0\|_{\infty} \le C'\epsilon$ for a constant $C'$ independent of $\epsilon$.

\section{An application in inverse problems}
\label{SEC:IP}

We now consider the following inverse medium problem as a direct application of the transport model:
\begin{equation}\label{EQ:FLUO}
        \begin{aligned}
       \partial_z u + \bk\cdot \nabla u + \Sigma_a(\aver{u}) u + \Sigma_s (\cI - \cK) u &= 0\quad &\text{ in }&X,\\
  u(z, \bx, \bk) &= 0\quad&\text{ on }&\Gamma_{-},\\
   u(0,\bx,  \bk) &= f(\bx)\quad &\text{ on }& X_0,
\end{aligned}
\end{equation}
where $f(\bx)\in L^{\infty}(X_0)$ is a strictly positive source function. We assume that~\eqref{EQ:FLUO} has a unique positive solution $u\in \cW_{\infty}$. 

We assume that the absorption coefficient $\Sigma_a$ is not known, but we have additional data that is the density of the solution, that is, 
\begin{equation}\label{EQ:Data}
	g(z, \bx)=\aver{u}:=\int_{\bbS^{d-1}} u(z, \bx, \bk) d\bk\,.
\end{equation}
The inverse problem amounts to finding the unknown absorption coefficients $\Sigma_{a, l}$ from the observed data $g$ from a given $f$.

We can prove the following result.
\begin{theorem}\label{LEM: UNIQ}
Let $g$ and $\widetilde g$ be data of the form~\eqref{EQ:Data} generated from~\eqref{EQ:FLUO} with coefficients $\Sigma_{a}$ and $\widetilde \Sigma_a$ respectively. Then $g=\widetilde g$ implies $\Sigma_a(\aver{u})=\widetilde\Sigma_a(\aver{\widetilde u})$.
\end{theorem}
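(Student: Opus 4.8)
The plan is to reduce the nonlinear identity to a linear uniqueness statement and then read off the effective absorption from a continuity equation. Since $g=\widetilde g$ gives $\aver{u}=\aver{\widetilde u}=:g$, I would set $\alpha(z,\bx):=\Sigma_a(\aver{u})=\Sigma_a(g)$ and $\widetilde\alpha(z,\bx):=\widetilde\Sigma_a(\aver{\widetilde u})=\widetilde\Sigma_a(g)$; these are now \emph{fixed} functions of $(z,\bx)$, and the assertion is precisely $\alpha=\widetilde\alpha$. Rewriting \eqref{EQ:FLUO}, the functions $u$ and $\widetilde u$ are the unique solutions of the \emph{linear} transport equations with absorptions $\alpha$ and $\widetilde\alpha$ respectively, the same inflow/initial data $f$, and the same $\Sigma_s,\cK$. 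Because $f>0$ and $u>0$, the density $g=\aver{u}$ is strictly positive, a fact I will use to divide by $g$.

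Next I would integrate each linear equation over $\bk\in\bbS^{d-1}$. Using that $\cK$ preserves angular averages (which follows from the normalization $\int_{\bbS^{d-1}}p(\bk,\bk')\,d\bk=1$), I obtain the two continuity equations $\partial_z g+\nabla\cdot\bJ+\alpha g=0$ and $\partial_z g+\nabla\cdot\widetilde\bJ+\widetilde\alpha g=0$, where $\bJ=\int_{\bbS^{d-1}}\bk\,u\,d\bk$ and $\widetilde\bJ=\int_{\bbS^{d-1}}\bk\,\widetilde u\,d\bk$. Subtracting yields the key identity $(\alpha-\widetilde\alpha)\,g=-\nabla\cdot\bJ_v$, with $\bJ_v:=\int_{\bbS^{d-1}}\bk\,(u-\widetilde u)\,d\bk$. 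At $z=0$ the common initial datum $f$ is isotropic, so $\bJ=\widetilde\bJ=0$ there and hence $\nabla\cdot\bJ_v(0,\cdot)=0$; since $g(0,\cdot)=\nu_{d-1}f>0$ this already gives $\alpha(0,\cdot)=\widetilde\alpha(0,\cdot)$. It therefore suffices to show $\bJ_v\equiv0$ on $X$, equivalently $v:=u-\widetilde u\equiv0$.

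To propagate this from $z=0$ I would study $v$, which solves the linear transport equation with source $-(\alpha-\widetilde\alpha)\widetilde u$ and zero initial/inflow data, and which inherits the constraint $\aver{v}\equiv0$ from $g=\widetilde g$. The crucial structural input is that, on the mean-zero mode, the scattering operator is coercive: by \eqref{EQ: THETA} with $\underline\theta>0$ there is $c_0>0$ with $\int_{\bbS^{d-1}}v(\cI-\cK)v\,d\bk\ge c_0\int_{\bbS^{d-1}}v^2\,d\bk$ whenever $\aver{v}=0$. Testing the $v$-equation against $v$ and discarding the nonnegative absorption and outflow boundary terms gives $\tfrac12\frac{d}{dz}\|v\|_{L^2(\Omega\times\bbS^{d-1})}^2+\Sigma_s c_0\|v\|_{L^2(\Omega\times\bbS^{d-1})}^2\le-\int_{\Omega\times\bbS^{d-1}}(\alpha-\widetilde\alpha)\widetilde u\,v$, with $v(0,\cdot,\cdot)=0$. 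Combined with the continuity identity $(\alpha-\widetilde\alpha)g=-\nabla\cdot\bJ_v$, which expresses $\alpha-\widetilde\alpha$ through $v$, the goal is to run a continuation argument in $z$ anchored at the base identity to force $v\equiv0$; then $\bJ_v\equiv0$ and the continuity identity gives $\alpha=\widetilde\alpha$ on $X$. Equivalently, once $v\equiv0$ one subtracts the two copies of \eqref{EQ:FLUO} directly to get $(\Sigma_a(\aver{u})-\widetilde\Sigma_a(\aver{\widetilde u}))\,u=0$ and divides by $u>0$.

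The main obstacle is exactly this last step: the continuity identity recovers the coefficient difference $\alpha-\widetilde\alpha$ from $v$ only with a loss of one spatial derivative (schematically $\|(\alpha-\widetilde\alpha)g\|_{H^{-1}}\lesssim\|v\|_{L^2}$), so the source $-(\alpha-\widetilde\alpha)\widetilde u$ on the right of the energy inequality cannot be absorbed into $\|v\|_{L^2}$ by a naive global estimate. I expect the resolution to exploit the coercivity of $\cI-\cK$ together with the forward (hyperbolic) propagation in $z$ and the vanishing of both $v$ and $\alpha-\widetilde\alpha$ at $z=0$, closing the estimate first on a short $z$-interval and then iterating; this localized continuation, rather than a single global energy bound, is where the real work lies.
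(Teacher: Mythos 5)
There is a genuine gap, and you have in fact located it yourself: with the test function $v$ the source term $-\int_{\Omega\times\bbS^{d-1}}(\alpha-\widetilde\alpha)\,\widetilde u\,v\,d\bx\,d\bk$ does not vanish (because $\widetilde u$ depends on $\bk$, the angular integral is not a multiple of $\aver{v}=0$), and your only handle on $\alpha-\widetilde\alpha$ is the continuity identity $(\alpha-\widetilde\alpha)g=-\nabla\cdot\bJ_v$, which loses a derivative. The proposed fix --- a short-time continuation anchored at $z=0$ --- is not carried out, and it is not clear it can be: the derivative loss is structural and does not become small on short $z$-intervals, so there is no smallness parameter with which to close the iteration. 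As written, the argument establishes $\alpha(0,\cdot)=\widetilde\alpha(0,\cdot)$ and the coercivity inequality, but not the theorem.

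The missing idea, which is the heart of the paper's proof, is the choice of test function: multiply the equation for $\delta u=u-\widetilde u$ by $\delta u/\widetilde u$ rather than by $\delta u$. The source term then becomes
\begin{equation*}
\int_{\Omega\times\bbS^{d-1}}\bigl(\widetilde\Sigma_a(\aver{\widetilde u})-\Sigma_a(\aver{u})\bigr)\,\widetilde u\,\frac{\delta u}{\widetilde u}\,d\bx\,d\bk
=\int_{\Omega}\bigl(\widetilde\Sigma_a(\aver{\widetilde u})-\Sigma_a(\aver{u})\bigr)\aver{\delta u}\,d\bx=0,
\end{equation*}
since the coefficient difference is independent of $\bk$ and $\aver{\delta u}=0$. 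This kills the problematic term \emph{exactly}, so no quantitative control of $\alpha-\widetilde\alpha$ in terms of $v$ is ever needed. The price is that one must manage the transport of the weight $1/\widetilde u$ (the identities \eqref{EQ:Identity1}--\eqref{EQ:Identity2}) and bound the scattering cross term via the inequality \eqref{EQ:Inequality1}, after which a single global Gr\"onwall estimate on $\int|\delta u|^2/\widetilde u$ gives $\delta u\equiv 0$; the conclusion $\Sigma_a(\aver{u})=\widetilde\Sigma_a(\aver{\widetilde u})$ then follows by solving for the coefficient from the equation, as in your last line. Your reduction to frozen linear coefficients, the observation $\aver{\delta u}=0$, and the use of positivity of $\widetilde u$ are all consistent with the paper; what is absent is precisely the weighted (relative-entropy-type) energy that makes the estimate closable.
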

\begin{proof}
Let $\delta u = u - \widetilde{u} $. We verify that for any $\delta u$, we have the identity
\begin{equation}\label{EQ:Identity1}
    \int_{\Omega\times \bbS^{d-1}} \left[\bk\cdot \nabla \delta u\right] \frac{\delta u}{\tilde u} d\bx d\bk = \int_{\Omega\times \bbS^{d-1}} \bk\cdot \nabla \frac{|\delta u|^2}{2\tilde u} d\bx d\bk - \int_{\Omega\times \bbS^{d-1}} \left[\bk\cdot \nabla \frac{1}{\tilde u}\right] \frac{|\delta u|^2}{2} d\bx d\bk\,,
\end{equation}
and the identity
\begin{equation}\label{EQ:Identity2}
\bk\cdot \nabla \frac{1}{\tilde u} = \frac{1}{\tilde u^2}\partial_z \tilde u + \frac{\widetilde{\Sigma_a}(\aver{\tilde u})}{\tilde u} + \frac{\Sigma_s(\cI - \cK)\tilde u }{|\tilde u|^2}\,.
\end{equation}
Using the fact that $g=\wt g$, we can also conclude that
\begin{equation}\label{EQ:Identity3}
\aver{\delta u}=0\,.
\end{equation}

It is also straightforward to check that $\delta u$ solves the transport equation:
\begin{equation}
     \partial_z \delta u + \bk\cdot \nabla \delta u + {\Sigma_a}(\aver{{u}}) \delta u + \Sigma_s (\cI - \cK) \delta u= (\widetilde{\Sigma}_a(\aver{\Tilde{u}}) -\Sigma_a(\aver{{u}}) ) \tilde u\,,
\end{equation}
with zero initial and incoming boundary conditions. We multiply this equation by $\frac{\delta u}{\tilde u}$ and integrate over $\Omega\times \bbS^{d-1}$ to obtain
\begin{equation}\label{EQ:WEAK}
\begin{aligned}
     &\int_{\Omega\times \bbS^{d-1}} (\partial_z \delta u )\frac{\delta u}{\tilde u} d\bx d\bk + \int_{\Omega\times \bbS^{d-1}} \bk\cdot \nabla \frac{|\delta u|^2}{2\tilde u} d\bx d\bk - \int_{\Omega\times \bbS^{d-1}} \frac{|\delta u|^2}{2|\tilde u|^2} \partial_z \tilde u d\bx d\bk \\
     &- \int_{\Omega\times \bbS^{d-1}} \frac{\widetilde{\Sigma_a}(\aver{\tilde u}) + \Sigma_s}{2\tilde u} |\delta u|^2 d\bx d\bk + \int_{\Omega\times \bbS^{d-1}} \frac{(\Sigma_s\cK \tilde u)|\delta u|^2}{\YZ{2}|\tilde u|^2} d\bx d\bk \\& + \int_{\Omega\times \bbS^{d-1}} \frac{\Sigma_a(\aver{u})}{\tilde u} |\delta u|^2 d\bx d\bk+ \Sigma_s \int_{\Omega\times \bbS^{d-1}} \left[(\cI - \cK) \delta u\right] \frac{\delta u}{\tilde u}d\bx d\bk \\ &= \int_{\Omega\times \bbS^{d-1}} (\widetilde{\Sigma_a}(\aver{\Tilde{u}}) -\Sigma_a(\aver{{u}}) ) \delta u d\bx d\bk.
\end{aligned}
\end{equation}
where we have used the identities~\eqref{EQ:Identity1} and ~\eqref{EQ:Identity2}. 

We first observe that since $\Sigma_a(\aver{{u}})$ and $\widetilde{\Sigma_a}(\aver{\Tilde{u}})$ do not depend on $\bk$, the right hand side of~\eqref{EQ:WEAK} vanishes due to~\eqref{EQ:Identity3}. 

To handle the left hand side of~\eqref{EQ:WEAK}, we observe that
\begin{equation}\label{EQ:EQ1}
           \int_{\Omega\times \bbS^{d-1}} \bk\cdot \nabla \frac{|\delta u|^2}{2\tilde u} d\bx d\bk = \int_{\Gamma_{+}} \bk\cdot \bn \frac{|\delta u|^2}{2\tilde u} \ge 0\,,
\end{equation}
\begin{equation}\label{EQ:EQ2}
           (\partial_z \delta u)\frac{\delta u}{\tilde u}= \frac{1}{2}\partial_z \left[\frac{|\delta u|^2}{\tilde u}\right] + \frac{1}{2} \frac{|\delta u|^2}{|\tilde u|^2} \partial_z \tilde u\,,
\end{equation}
and 
\begin{equation}\label{EQ:EQ3}
       \Sigma_s \int_{\Omega\times \bbS^{d-1}} \left[(\cI - \cK)\delta u \right] \frac{\delta u}{\tilde u} d\bx d\bk = \Sigma_s \int_{\Omega\times \bbS^{d-1}} \frac{|\delta u|^2}{\tilde u} d\bx d\bk - \Sigma_s \int_{\Omega\times \bbS^{d-1}} (\cK \delta u)\frac{\delta u}{\tilde u} d\bx d\bk\,.
\end{equation}
We can also prove the following inequality (see \Cref{SEC:Proof of Inequality1}), 
\begin{equation}\label{EQ:Inequality1}
      \int_{\Omega\times \bbS^{d-1}} ( \cK \delta u ) \frac{\delta u}{\tilde u} d\bx d\bk \le\int_{\Omega\times \bbS^{d-1}} (\cK \tilde u) \frac{|\delta u|^2}{\YZ{2}|\tilde u|^2} d\bx d\bk + \nu_{d-1}\frac{\kappa^2}{\YZ{2}} \int_{\Omega\times \bbS^{d-1}} \frac{|\delta u|^2}{\tilde u} d\bx d\bk,
\end{equation}
where $\kappa = \frac{(\overline{\theta} - \underline{\theta})}{2\sqrt{\underline\theta}}$, the constants $\overline{\theta}$ and $\underline{\theta}$ are defined in~\eqref{EQ: THETA}. 

Let $M(z, \bx) :=  \Sigma_a(\aver{u}) - \frac{\widetilde{\Sigma_a}(\aver{\tilde u})}{2} +\Sigma_s \left(  \frac{\YZ{1} -\nu_{d-1} \kappa^2}{\YZ{2}}\right) $. We can then deduce from~\eqref{EQ:WEAK}, using~\eqref{EQ:EQ1},~\eqref{EQ:EQ2},~\eqref{EQ:EQ3}, and~\eqref{EQ:Inequality1}, that
\begin{equation}
\begin{aligned}
    \frac{1}{2} \partial_z \int_{\Omega\times \bbS^{d-1}}  \frac{|\delta u|^2}{\tilde u} d\bx d\bk + \int_{\Omega\times \bbS^{d-1}} M(z, \bx)\frac{|\delta u|^2 }{\tilde u} d\bx d\bk \le  0.      
\end{aligned}
\end{equation}
Since the coefficients $\Sigma_{a,l}$ and $\Sigma_s$ are finite and both $\aver{u}, \aver{\tilde {u}}$ are bounded by $\|f\|_{L^{\infty}(X_0)}$,  there exists a constant $\underline{M} = \inf_{(0, T)\times \Omega} M(z, \bx)$ that $$  \frac{1}{2} \partial_z \int_{\Omega\times \bbS^{d-1}}  \frac{|\delta u|^2}{\tilde u} d\bx d\bk + \underline{M} \int_{\Omega\times \bbS^{d-1}} \frac{|\delta u|^2 }{\tilde u} d\bx d\bk\le 0 .$$ Then by the Gr\"onwall inequality and the initial condition $\delta u = 0$ at $z = 0$, we must have that $\delta u \equiv 0$. Therefore $u = \tilde u$, which implies that 
\begin{equation}
    \Sigma_a(\aver{u}) = -\frac{\partial_z  u + \bk \cdot \nabla u + \Sigma_s(\cI -\cK)u}{u} = -\frac{\partial_z  \tilde u + \bk \cdot \nabla \tilde u + \Sigma_s(\cI -\cK)\tilde u}{\tilde u} = \widetilde{\Sigma_a}(\aver{\tilde u})\,.
\end{equation}
The proof is complete.
\end{proof}
The following corollary is a direct result of the comparison principle and Theorem~\ref{LEM: UNIQ}.
\begin{corollary}
Under the assumption of Theorem~\ref{LEM: UNIQ}, the coefficients $\Sigma_{a,l}$ can be uniquely determined with finitely many data sets $\aver{u_j}$, $j=1,2,\dots, L+1$ if the initial conditions satisfy $0 < f_1 < f_2 < \dots < f_{L+1}$ on $\Omega$.
\end{corollary}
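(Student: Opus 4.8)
The plan is to reduce the reconstruction of the $L+1$ coefficient functions $\Sigma_{a,0},\dots,\Sigma_{a,L}$ to a pointwise polynomial interpolation problem. Fix $(z,\bx)\in(0,T)\times\Omega$, let $u_j$ be the solution of~\eqref{EQ:FLUO} driven by $f_j$, and write $g_j=\aver{u_j}$ for its density. By Theorem~\ref{LEM: UNIQ} the datum $g_j$ uniquely determines the function $(z,\bx)\mapsto\Sigma_a(\aver{u_j})$; since $g_j$ is itself observed, at every point we know both the node $g_j(z,\bx)$ and the value $A_j(z,\bx):=\Sigma_a(g_j(z,\bx))=\sum_{l=0}^{L}\Sigma_{a,l}(z,\bx)\,g_j(z,\bx)^l$. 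Collecting the $L+1$ experiments yields, at each fixed point, the linear system
\[
A_j(z,\bx)=\sum_{l=0}^{L}\Sigma_{a,l}(z,\bx)\,g_j(z,\bx)^l,\qquad j=1,\dots,L+1,
\]
for the common unknowns $\Sigma_{a,0}(z,\bx),\dots,\Sigma_{a,L}(z,\bx)$, whose coefficient matrix is the Vandermonde matrix $\big(g_j(z,\bx)^l\big)_{j,l}$.

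This system is uniquely solvable exactly when the nodes $g_1(z,\bx),\dots,g_{L+1}(z,\bx)$ are pairwise distinct, so everything hinges on showing that the ordering of the sources is inherited by the densities: I claim $0<f_1<\dots<f_{L+1}$ on $\Omega$ forces $g_1<\dots<g_{L+1}$ throughout $(0,T)\times\Omega$. At $z=0$ this is immediate, since $g_j(0,\bx)=\nu_{d-1}f_j(\bx)$, and positivity of $g_j$ for $z>0$ follows from positivity of the solutions. The substance is the strict propagation of the ordering in $z$, which I would obtain from a comparison principle for~\eqref{EQ:FLUO}.

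Establishing that comparison principle is the hard part. Setting $w=u_j-u_i$ for $i<j$ and subtracting two copies of~\eqref{EQ:FLUO}, after the algebraic identity $\Sigma_a(\aver{u_j})u_j-\Sigma_a(\aver{u_i})u_i=\Sigma_a(\aver{u_j})w+u_i\big(\Sigma_a(\aver{u_j})-\Sigma_a(\aver{u_i})\big)$ the difference solves
\[
\partial_z w+\bk\cdot\nabla w+\Sigma_a(\aver{u_j})w+\Sigma_s(\cI-\cK)w=-u_i\,Q\,\aver{w},\qquad w|_{z=0}=f_j-f_i>0,\quad w|_{\Gamma_-}=0,
\]
with $Q=\sum_{l=1}^{L}\Sigma_{a,l}\sum_{r=0}^{l-1}\aver{u_j}^{\,r}\aver{u_i}^{\,l-1-r}\ge0$. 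This is \emph{not} a linear transport equation with a sign-definite zeroth-order term: the density-dependent absorption generates the source $-u_i\,Q\,\aver{w}$, whose sign is tied to the very ordering one is trying to prove. Consequently neither the naive maximum principle nor a weighted $L^2$ estimate on $w^-$ (as in the proof of Theorem~\ref{LEM: UNIQ}) closes directly; the obstruction is precisely the indefinite feedback term proportional to $\aver{w}$.

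To circumvent this I would not attack $w$ head-on but exploit the monotone structure of the fixed-point construction from Theorem~\ref{THM: UNIQUE}. For a fixed source $f$, the map $F_f:m\mapsto\aver{\phi}$, with $\phi$ the solution of the \emph{linear} transport equation~\eqref{EQ: RTE H} with $\Sigma_a(m)$ frozen, enjoys two clean monotonicities coming from the standard linear maximum principle: $F_f$ is nondecreasing in $f$ and nonincreasing in $m$ (larger frozen density means larger absorption, hence smaller solution). Writing $m_i^\ast=\aver{u_i}$ and $m_j^\ast=\aver{u_j}$ for the unique fixed points supplied by Theorem~\ref{THM: UNIQUE}, monotonicity in the source gives $F_{f_j}(m_i^\ast)\ge F_{f_i}(m_i^\ast)=m_i^\ast$, strictly where $f_j>f_i$. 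Since $F_{f_j}$ is order-reversing and has a unique fixed point, that fixed point is bracketed strictly between $m_i^\ast$ and $F_{f_j}(m_i^\ast)$ (equivalently, run the order-preserving iteration $F_{f_j}^2$ and identify the two-sided limits by uniqueness), yielding $m_i^\ast<m_j^\ast$, i.e. $g_i<g_j$. This is exactly the step in which the smallness hypotheses of Theorem~\ref{THM: UNIQUE} — which make the nonlinear solution map well-behaved — are needed. With the nodes shown to be distinct, Vandermonde invertibility recovers $\Sigma_{a,0}(z,\bx),\dots,\Sigma_{a,L}(z,\bx)$, and since $(z,\bx)$ was arbitrary all coefficients are determined.
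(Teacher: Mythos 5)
Your overall architecture is exactly the one the paper intends: the paper offers no proof beyond the single sentence that the corollary ``is a direct result of the comparison principle and Theorem~\ref{LEM: UNIQ}'', and your reduction --- Theorem~\ref{LEM: UNIQ} turns each datum $g_j$ into the known pair $\bigl(g_j(z,\bx),\,\Sigma_a(g_j(z,\bx))\bigr)$, the strict ordering of the densities makes the interpolation nodes pairwise distinct, and the Vandermonde system then recovers $\Sigma_{a,0},\dots,\Sigma_{a,L}$ pointwise --- is precisely that implicit argument, correctly spelled out. You also correctly isolate the only genuinely nontrivial ingredient, namely that $f_i<f_j$ forces $\aver{u_i}<\aver{u_j}$ for $z>0$, which the paper simply asserts as ``the comparison principle''; your observation that the feedback term $-u_i\,Q\,\aver{w}$ defeats the naive maximum principle is on point.

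The argument you then give for that ingredient, however, does not close. The bracketing step for the order-reversing map $G=F_{f_j}$ is the problem: from $G(m_i^\ast)\ge m_i^\ast$ you cannot conclude $m_j^\ast\ge m_i^\ast$ the way you would for a scalar decreasing map, because (a) two elements of $\cM$ need not be pointwise comparable, so the contrapositive ``if the fixed point were somewhere below $m_i^\ast$ then $G$ would push it up'' is unavailable in the function setting, and (b) the iteration by $G^2$ is order-preserving but its monotone limits are fixed points of $G^2$, not necessarily of $G$; uniqueness of the fixed point of $G$ (via Kellogg) does not exclude period-two orbits of $G$, so ``identify the two-sided limits by uniqueness'' is not justified. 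A route that does close, under a smallness condition of the type appearing in Theorem~\ref{THM: UNIQUE}(ii), is to work directly with $w=u_j-u_i$ through the Duhamel representation along characteristics: on the cone $w\ge 0$ the right-hand side $\Sigma_s\cK w-u_i\,Q\,\aver{w}$ is bounded below by $\bigl(\Sigma_s\underline{\theta}-\|f\|_{\infty}\,\Sigma_a'(\nu_{d-1}\|f\|_{\infty})\bigr)\aver{w}$, which is nonnegative when the data are small enough; the Picard iteration for $w$ therefore preserves nonnegativity, and the strictly positive free transport of $f_j-f_i$ gives strictness of the ordering. Either repair the fixed-point bracketing along these lines or replace it by this direct cone-preservation argument; as written, the step that produces distinct Vandermonde nodes is unproved.
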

\section{Concluding remarks}
\label{SEC:Concl}

This work describes the derivation of semilinear radiative transport models for wave propagation in highly-scattering media with nonlinear absorption. While the technical aspects of the derivation are relatively standard, we believe that our work provides a theoretical justification for the semilinear radiative transport models, as well as their diffusion approximations, used in applications such as multi-photon imaging~\cite{BaReZh-JBO18,LaReZh-SIAM22,ReZh-SIAM18,ReZh-SIAM21,StZh-SIAM22,YaNaTa-OE11,YeKo-OE10,YiLiAl-AO99}. 

As we have remarked before, one concrete example of the quadratic absorption we considered here is two-photon absorption in nonlinear optics~\cite{Mahr-QE12,RuPe-AOP10}. The radiative transport equation we derive for this case, equation~\eqref{EQ: TRANS}, is different from the two-photon radiative transport equation of~\cite{MaSc-PRE14} where the phase space intensity corresponds to a two-photon entangled state of light, not two-photon absorption.

The calculation we have presented here does not generalize to media with nonlinearities such as those that arise in the Kerr effect and second harmonic generation~\cite{AsZh-JST20,GoPs-PRA11,ReLi-OE10,SzKi-JOSA18}. The derivation of transport equations for wave propagation in such media is a topic of great interest, but is much more challenging due to the richness of the  behavior of the corresponding wave equations~\cite{CoAnRu-PRA07,CoLe-PRB11,Skipetrov-PRE03}. We point to the derivation in~\cite{FaJiPa-SIAM03} in the context of the nonlinear Schr\"{o}dinger equation within the mean-field approximation, and leave further investigations in this direction to future work. We note that the acousto-optic effect, in which light undergoes a frequency shift due to interaction with an acoustic wave, has also been studied in random media. Although this effect is not nonlinear in the sense considered in this paper, it is possible to develop a suitable kinetic model and associated radiative transport equations~\cite{HoSc-PRE17}.

\section*{Acknowledgment}

The work of JK is supported in part by the Simons Foundation Math + X Investigator Award \#376319 (to Michael I. Weinstein). KR is partially supported by the National Science Foundation through grants DMS-1913309 and DMS-1937254. JCS acknowledges support from the NSF Grant DMS-1912821 and the AFOSR Grant FA9550-19-1-0320.

\appendix
\section{Proof of inequality\texorpdfstring{~\eqref{EQ:Inequality1}}{}}
\label{SEC:Proof of Inequality1}
\begin{proof}
Using the AM-GM inequality, we deduce that
\begin{equation*}
    \int_{\Omega\times \bbS^{d-1}} ( \cK \delta u ) \frac{\delta u}{\tilde u} d\bx d\bk \le \int_{\Omega\times \bbS^{d-1}} (\cK \tilde u) \frac{|\delta u|^2}{\YZ{2}|\tilde u|^2} d\bx d\bk + \frac{1}{\YZ{2}}\int_{\Omega\times \bbS^{d-1}} \left|\frac{ \cK \delta u }{\sqrt{\cK \tilde u}}\right|^2 d\bx d\bk.
\end{equation*}
Since $\underline{\theta}\le p(\bk, \bk')\le \overline{\theta}$ and $\aver{\delta u} = 0$, we have the preliminary estimate
\begin{equation*}
 \left|  \frac{ \cK \delta u }{\sqrt{\cK \tilde u}} \right|= \left| \frac{\cK \delta u - \frac{\overline{\theta} + \underline{\theta}}{2}\aver{\delta u}  }{\sqrt{\cK \tilde u}}\right| \le \frac{(\overline{\theta} - \underline{\theta})\aver{|\delta u|}}{2\sqrt{ \underline{\theta}\aver{\tilde u} }}.
\end{equation*}
Denote the constant $\kappa := \frac{(\overline{\theta} - \underline{\theta})}{2\sqrt{\underline\theta}}$. We then have
\begin{equation*}
    \int_{\Omega\times \bbS^{d-1}} \left|\frac{ \cK \delta u }{\sqrt{\cK \tilde u}}\right|^2 d\bx d\bk \le  \kappa^2 \int_{\Omega\times \bbS^{d-1}} \frac{\aver{|\delta u|}^2}{{ \aver{\tilde u} }} d\bx d\bk =  \nu_{d-1} \kappa^2\int_{\Omega} \frac{\aver{|\delta u|}^2}{{ \aver{\tilde u} }} d\bx.
\end{equation*}
Using the Cauchy-Schwartz inequality, we arrive at
\begin{equation*}
 \int_{\bbS^{d-1}}\frac{|\delta u(\bx, \bk)|^2}{\tilde u(\bx, \bk)} d\bk \int_{\bbS^{d-1}} \tilde u(\bx, \bk) d\bk \ge \left( \int_{\bbS^{d-1}} |\delta u(\bx, \bk)| d\bk \right)^2 = \aver{|\delta u|} ^2\,.
\end{equation*}
Therefore, we have
\begin{equation*}
    \frac{\aver{|\delta u|}^2}{{ \aver{\tilde u} }} \le \int_{\bbS^{d-1}}\frac{|\delta u(\bx, \bk)|^2}{\tilde u(\bx, \bk)} d\bk.
\end{equation*}
This implies that 
\begin{equation*}
     \int_{\Omega\times \bbS^{d-1}} ( \cK \delta u ) \frac{\delta u}{\tilde u} d\bx d\bk \le\int_{\Omega\times \bbS^{d-1}} (\cK \tilde u) \frac{|\delta u|^2}{\YZ{2}|\tilde u|^2} d\bx d\bk + \nu_{d-1}\frac{ \kappa^2}{\YZ{2}} \int_{\Omega\times \bbS^{d-1}} \frac{|\delta u|^2}{\tilde u} d\bx d\bk.
\end{equation*}
This completes the proof.
\end{proof}

\bibliographystyle{siam}
\bibliography{RH-BIB}

\end{document}